\renewcommand{\PrintDOI}[1]{\href{http://dx.doi.org/\detokenize{#1}}{doi: \detokenize{#1}}%
	\IfEmptyBibField{pages}{, (to appear in print)}{}}
\theoremstyle{definition}
\newtheorem{theorem}{Theorem}[section]
\newtheorem{lemma}[theorem]{Lemma}
\theoremstyle{definition}
\newtheorem{definition}[theorem]{Definition}
\newtheorem{example}[theorem]{Example}
\theoremstyle{remark}
\newtheorem{remark}[theorem]{Remark}
\numberwithin{equation}{section}
\numberwithin{equation}{section}
\DeclareMathOperator{\Sym}{Sym}
\DeclareMathOperator{\Span}{Span}
\DeclareMathOperator{\mr}{mir}
 \DeclareMathOperator{\dett}{det}
 \DeclareMathOperator{\inv}{inv}
\begin{document}

\title{An Efficient Algorithm To Compute The Colored Jones Polynomial}

\author{Mustafa Hajij}
\address{Department of Computer Science Engineering,
University of South Florida, 
Tampa, FL USA}
\email{mhajij@usf.edu}

\author{Jesse Levitt}
\address{Department of Mathematics,
University of Southern California, 
Los Angeles, CA USA}
\email{jslevitt@usc.edu }




 \maketitle

The colored Jones polynomial is a knot invariant that plays a central role in low dimensional topology.
We give a simple and an efficient algorithm to compute the colored Jones polynomial of any knot. 
Our algorithm utilizes the walks along a braid model of the colored Jones polynomial that was refined by Armond from the work of Huynh and L\^{e}.
The walk model gives rise to ordered words in a $q$-Weyl algebra which we address and study from multiple perspectives. 
We provide a highly optimized Mathematica implementation that exploits the modern features of the software. 
We include a performance analysis for the running time of our algorithm. Our implementation of the algorithm shows that our method usually runs in faster time than the existing state-of the-art method by an order of magnitude. 

\section{Introduction}
Let $K$ be a knot in $\mathbb{R}^3$ and $N$ be a positive integer.
The colored Jones polynomial (CJP) denoted $J_{N,K}(q)$ and defined in \cite{murakami2001colored,reshetikhin1990ribbon,turaev1988yang} is a 
Laurent polynomial 
with integer coefficients in the variable $q$.
The label $N$ stands for the coloring, i.e., the $N^{th}$ irreducible representation of $\mathfrak{sl}(2,\mathbb{C})$ from which it is calculated.
The polynomial $J_{2,K}(q)$ is the original Jones polynomial \cite{jones1997polynomial}.
The colored Jones polynomial and its generalizations \cite{freyd1985new,turaev2016quantum,jones1990hecke,kauffman1990invariant} play an important role in low-dimensional topology, in particular through its connection to the volume conjecture~\cite{murakami2001colored,kashaev1997hyperbolic,murakami2011introduction}. Since its discovery, the Jones polynomial has lead major discoveries \cite{witten1989quantum,witten19882+} and seen major advances in various areas in low-dimensional topology \cite{le1905aj,khovanov2005categorifications,dasbach2006head,bar1996melvin,le2006colored,garoufalidis2005colored,le2000integrality}.
Moreover, recent years have witnessed considerable developments that established multiple connections between the colored Jones polynomial and number theory.
See for instance~\cite{lee2018trivial,elhamdadi2017pretzel,hajij2016tail,beirne2017q,lovejoy2013bailey,armond2011rogers,hajij2017colored,bataineh2016colored}.
The coefficients of the colored Jones polynomial have been proven to give rise to Ramanujan theta and false theta identities~\cite{armond2011rogers,elhamdadi2017foundations}. See also \cite{futer2012guts} and the references therein for more about the history and development of the colored Jones polynomial. 

Computing the colored Jones polynomial is a highly non-trivial task. Much of the literature on computing the colored Jones polynomial is devoted to giving quantum algorithms of this polynomial. The existence of an efficient algorithm for approximating was implied in the work of Freedman, and  Kitaev \cite{freedman2002simulation}. Later an explicit quantum algorithm for approximating the Jones polynomial was given in \cite{aharonov2009polynomial} and extended later in \cite{wocjan2006jones,aharonov2011bqp}. More on the quantum computation of the Jones polynomial can be found in \cite{kauffman2002quantum,passante2009experimental}.

One of the earlier classical algorithms to compute the colored Jones polynomial was given in \cite{masbaum19943} by Masbaum and Vogel where the skein theory associated to the Kauffman bracket skein module was utilized. This algorithm can be considered as an extension of the Kauffman bracket which in turn can be used in algorithms to compute the original Jones polynomial. Masbaum and Vogel's algorithm however relies on certain diagrammatic manipulations that require special handling for each knot making it hard to obtain an efficient general implementation. In \cite{garoufalidis2005colored} the $q$-holonomicity of the colored Jones polynomial was proven and this in turn can be used to compute the colored Jones polynomial. Bar-Natan’s Mathematica package KnotTheory \cite{KA} implements this to compute the colored Jones polynomial. The algorithm however is mostly feasible for knots with small crossing number and color $N < 9$. 
The other commonly used publicly available implementation for the Jones polynomial that we are aware of is SnapPy \cite{SnapPy}, but this implementation however only handles the Jones polynomial, i.e., when $N=2$. 

Explicit formulas for  of the colored Jones polynomial of the double twist knots can be found in \cite{masbaum2003skein} and for torus knots in \cite{morton1995coloured}. Moreover, a difference equation for torus knots is given in \cite{hikami2004difference}. The complexity of the Jones polynomial  of alternating links is studied in \cite{jaeger1990computational}. More on the computational complexity of the Jones polynomial and its generalization can be found in \cite{freedman1998p}.
 
 
In this article we give an efficient classical algorithm to compute the colored Jones polynomial for any knot based on the quantum determinant formula suggested by Huynh and L\^{e}~\cite{huynh2005colored}.  In particular we consider a walk along a braid interpretation of the evaluation of this quantum determinant that was developed by Armond~\cite{armond2014walks}, in light of Jones' interpretation of the Burau Representation~\cite{jones1987Hecke}.
This walk model gives rise to an ordered word in a $q$-Weyl algebra which is studied from multiple perspectives. The algorithm converts each word to a standard word defined in this paper and then evaluates each word to a Laurent polynomial. To minimize the number of words needed for evaluation, we utilize a structural theorem regarding the set of walks on braids. Along with our algorithm we provide a highly optimized Mathematica ~\cite{Mathematica} implementation that exploits several modern features and functionalities in Mathematica. 
\section{Preliminaries}


We give a quick review here of braid groups as it will be needed in later sections.

\subsection{Braid Group}
The input of the colored Jones polynomial algorithm we present here is a braid whose closure forms a knot.
Alexander's theorem~\cite{alexander1923lemma} assures that any knot can be realized as the closure of a  braid in this manner.


Let $D^3$ denote the $3$ manifold with boundary $[0,1]^3$. Fix $m$ points on the top of $D^3$ and $m$ points on the bottom. A \textit{braid} on $m$ strands is a curve $\beta_m$ embedded in $D^3$ and decomposed into $m$ arcs such it meets $D^3$ orthogonally in exactly $2m$ points and where no arc intersects any horizontal plane more than once. A braid is usually represent a planar projection or a \textit{braid diagram}.
 In the braid diagram we make sure that each crossing the over strand is distinguishable from the under-strand by creating a break in the under-strand. Figure~\ref{braid} shows an example of a braid diagram on $3$ strands.

\begin{figure}[H]
  \centering
   {\includegraphics[scale=0.4]{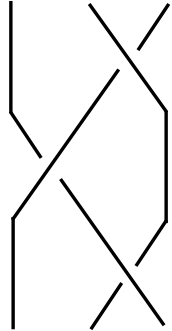}
    \caption{An example of a braid diagram on $3$ strands.}
  \label{braid}}
\end{figure}

The set of all braids $B_m$ has a group structure with multiplication as follows. Given two $m$ strand braids $\beta_1$ and $\beta_2$ the product of these braids, $\beta_1\cdot \beta_2$ is the braid given the by vertical concatenation of $\beta_1$ on top of $\beta_2$. As in Figure~\ref{product}.
\begin{figure}[H]
  \centering
   {\includegraphics[scale=0.33]{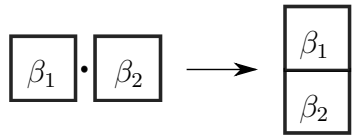}
      \put (-123,20) {\large{$\beta_1$}}
    \put (-90,20) {\large{$\beta_2$}}
     \put (-20,31) {\large{$\beta_1$}}
     \put (-20,7) {\large{$\beta_2$}}
    \caption{The product of two braids.}
  \label{product}}
\end{figure}

The group structure of $B_m$ follows directly from this. The braid group $B_m$ on $m$ strands can be described algebraically in terms of generators and relations using Artin's presentation. In this presentation, the group $B_m$ is given by the generators : 
\begin{equation*}
\sigma_1,\ldots,\sigma_{n-1},\sigma_1^{-1},\ldots,\sigma_{n-1}^{-1}, 
\end{equation*}
subject to the relations:
\begin{enumerate}
\item For all $1\leq  i < n$:   $\sigma_i \sigma_i^{-1} = e=\sigma_i^{-1} \sigma_i $.
\item For $|i-j|>1$: $\sigma_i \sigma_j= \sigma _j \sigma _i$. 
\item For all $i<m-1$:  $\sigma_i \sigma_{i+1} \sigma_i = \sigma_{i+1}\sigma_i \sigma_{i+1}$
\end{enumerate}
The correspondence between the pictorial definition of the braid group and the algebraic definition is given by sending the generator $\sigma_i$ to the picture illustrated in Figure~\ref{generator}.

\begin{figure}[H]
  \centering
   {\includegraphics[scale=0.3]{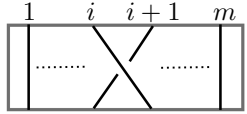}
        \put (-84,35) {\small{$1$}}
    \put (-60,35) {\small{$i$}}
    \put (-45,35) {\small{$i+1$}}
    \put (-12,35) {\small{$m$}}
    \caption{The braid group generator $\sigma_i$.}
  \label{generator}}
\end{figure}

The \textit{braid closure} $\hat{\beta}$ of a braid $\beta$ is given by joining the $m$ points
on the top of $\beta$ to the $m$ points on the bottom by parallel arcs as follows:
\begin{figure}[H]
  \centering
   {\includegraphics[scale=0.45]{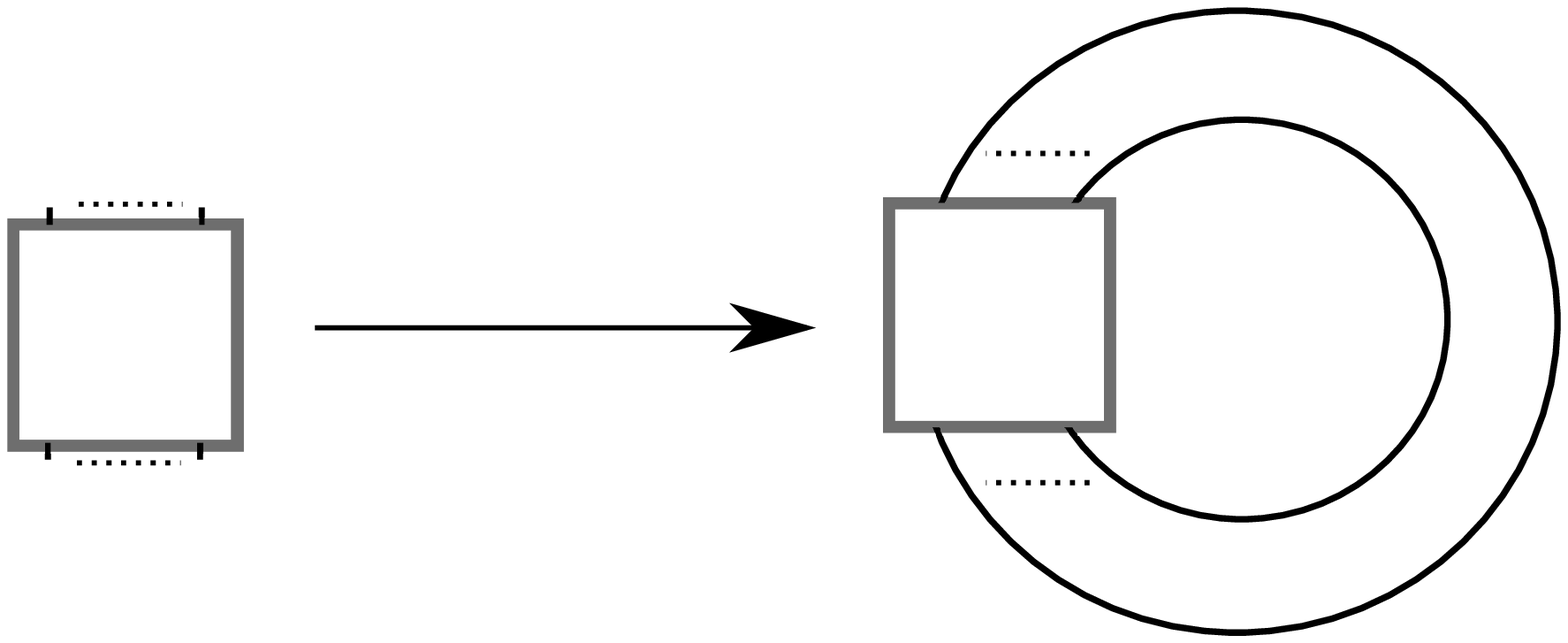}
   \put (-215,42) {$\beta$}
  \put (-90,42) { $ \beta$ }
  \put (-60,-15) { $\hat{ \beta}$ }
    \caption{The braid $\beta$ and its closure $ \hat{\beta}$.}
  \label{closure}}
\end{figure}

\section{Walks on Braids}
\label{sec:Walks}
In~\cite{huynh2005colored} Lee suggested a realization of the colored Jones polynomial as the inverse of the quantum determinant of an almost quantum matrix.
The entries of this matrix live in the $q$-Weyl algebra. The quantum determinant version of the colored Jones polynomial admits a walks along a braid model described by Armond in~\cite{armond2011rogers}. In this section we review this walk model and define the necessary terms that will be utilized in the algorithm section.

Recall that all knots can be realized as the closure of a braid~\cite{alexander1923lemma}. Moreover, given a knot, this braid can be computed using Yamada-Vogel's Algorithm~\cite{yamada1987minimal,vogel1990representation}. The algorithm utilized here takes a braid as input,which is uniquely determined by the braid sequence $\alpha$, described next. 

Let $\beta$ be a braid in $B_m$ given by the braid word:
\begin{equation}
\beta=\sigma_{i_1}^{\epsilon_1}\sigma_{i_2}^{\epsilon_2}\ldots\sigma_{i_k}^{\epsilon_k}.
\end{equation}

The \textit{ braid sequence} $\alpha$ of the braid $\beta$    is a finite sequence $\alpha = (\alpha_1,\alpha_2,\ldots,\alpha_k)$ of pairs $\alpha_j=(i_j,\epsilon_j)$,  $1 \leq i_j < m$  and $\epsilon_j = \pm 1$. For instance the braid sequence of the braid $\sigma_1^{-1} \sigma_2 \sigma_1^{-1} \sigma_2$ is $\beta(\alpha) = ((1, -1),(2, +1),(1, -1),(2, +1))$. Conversely, a sequence  $\alpha = (\alpha_1,\alpha_2,\ldots,\alpha_k)$ of pairs $\alpha_j=(i_j,\epsilon_j)$, where $1 \leq i_j < m$ and $\epsilon_j = \pm 1$ gives rise to a braid  $\sigma_{i_1}^{\epsilon_1}\sigma_{i_2}^{\epsilon_2}\ldots\sigma_{i_k}^{\epsilon_k}$ in $B_m$. We recommend the reader to the work of T. Gittings~\cite{gittings2004minimum} or the tables of C. Livingston~\cite{livingston} for collections of minimal braid sequences.

We will denote by $\beta(\alpha)$ the braid associated to the sequence $\alpha = (\alpha_1,\alpha_2,\ldots,\alpha_k)$. For the rest of this paper we will refer to the braid $\beta$ and its associated braid sequence interchangeably. Moreover, we will refer to the pair $\alpha_j=(i_j,\epsilon_j)$ as a crossing in the braid. 

A $\textit{path}$ on a braid $\beta(\alpha)$ from a strand $i$, counting up from left to right as in Figure~\ref{generator}, to a strand $j$, is defined as follows. We start at the bottom at the $i$ strand of the braid and we march to the top. Whenever arriving at an over-crossing we can either jump down to the lower strand or continue along the same strand. We continue in this manner until we reach the top of the braid at the $j$ strand. Note that when a path goes from the bottom to the top on a braid $\beta(\alpha)$ it passes some collection of crossings $\alpha_i$ of $\beta$. At a crossing $\alpha_i$ we encode this passage by the following conventions:       

\begin{enumerate}
\item If a path jumps down at $\alpha_i$, we assign that crossing the weight $a_{i}^{\epsilon_i}$.

\item If a path follows the bottom strand at $\alpha_i$, we assign that crossing the weight $b_{i}^{\epsilon_i}$.

\item If a path follows the top strand at $\alpha_i$, we assign that crossing the weight $c_{i}^{\epsilon_i}$.
\end{enumerate}
Figure~\ref{paths_weight} illustrates the three types possible behavior of a path at a crossing $\alpha_i=(j,+1)$ and the weights assigns to local path in each case. 
\begin{figure}[H]
  \centering
   {\includegraphics[scale=0.7]{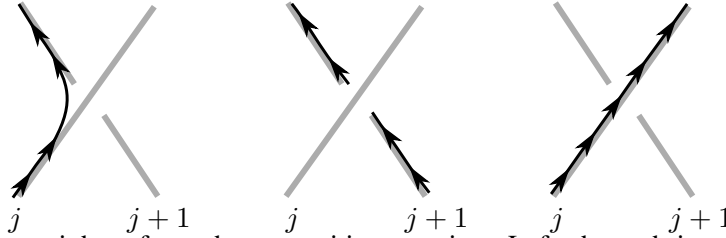}
   \put (-260,-10) {$j$}
   \put (-215,-10) {$j+1$}
   \put (-155,-10) {$j$}
   \put (-110,-10) {$j+1$}
    \put (-55,-10) {$j$}
    \put (-10,-10) {$j+1$}
    \caption{The weights of a path at a positive crossing. Left, the path jumps down, this assigns the weight \textbf{$a_i^+$}. Middle, the path follows the bottom strand, this assigns the weight \textbf{$b_i^+$}. Right, the path follows the top strand, this assigns the weight \textbf{$c_i^+$}.}
  \label{paths_weight}}
\end{figure}

The weights $a$, $b$ and $c$ will be given additional meaning in section~\ref{algebra}. The \textit{weight} of a single strand path is the product of the weights of its crossings, the \textit{weight of a path} is the product from bottom right to left of the weights of each individual strand path. 
Notice that the weights are not allowed to commute in general. This is elaborated on in section~\ref{algebra}.

\begin{remark}
It is important to notice that we read the braid in two different directions. When we read a path on a braid we read it from bottom to top. Whereas when we read the braid word as product of braid group generators we read that from top to bottom. See example~\ref{ex:Fig8} for an illustration.
\end{remark}
A \textit{walk} $W$ along the braid $\beta$ in $B_m$ consists of the following data :

\begin{enumerate} 

\item A set $J \subset  \{1,\ldots,m\}$.
\item A permutation $\pi$  of $J$.

\item A collection of paths $\mathcal{P}$ on $\beta$ with exactly one path in $\mathcal{P}$ from strand $j$ to strand $\pi (j)$, for each $j \in  J$.
\end{enumerate}

We denote by $\inv(\pi)$ the number of inversions in the permutation $\pi$, i.e. the number of pairs $i < j$ such that $\pi (i) > \pi(j)$. Then every walk is assigned a weight defined as $(-1)(-q)^{|J|+\inv(\pi)}$ times the product of the weights of the paths in the collection $\mathcal{P}$. A  walk is said to be \textit{simple} if no two paths in the collection $\mathcal{P}$ traverse the same point on the braid. Simple walks are desirable for computational reasons and we will consider only them  for algorithmic efficiency. A \textit{stack of walks} is an ordered collection of walks. Finally, \textit{the weight of a stack} is the product of the weights of its walks.

\section{Algebra of the deformed Burau matrix}
\label{algebra}
Let $ \mathcal{R}= \mathbb{Z}[q^{\pm1}]$. For each braid $\beta=\sigma_{i_1}^{\epsilon_1}\sigma_{i_2}^{\epsilon_2}\cdots\sigma_{i_k}^{\epsilon_k}$ we define an $\mathcal{R}$-algebra   $\mathcal{A}_\epsilon$, where  $\epsilon$ is the sequence $ (\epsilon_1,\epsilon_2,\ldots,\epsilon_k)$, as follows. We associate to every $\epsilon_i$ a the letters : $a_{i}^{\epsilon_i}$, $b_{i}^{\epsilon_i}$ and $c_{i}^{\epsilon_i}$. The algebra $\mathcal{A}_\epsilon$ is the $\mathcal{R}$-algebra freely generated by the set $\mathcal{L}_{\epsilon}\coloneqq \{a_{i}^{\epsilon_i},b_{i}^{\epsilon_i},c_{i}^{\epsilon_i}\}_{i=1}^k$ subject to the following commutation relations:
\begin{align}
\label{relation1}
a_{i}^{+} b_{i}^{+} &= b_{i}^{+} a_{i}^{+} ,& a_{i}^{+} c_{i}^{+} &= qc_{i}^{+}a_{i}^{+},& b_{i}^{+}c_{i}^{+} &= q^2c_{i}^{+}b_{i}^{+},\\
\label{relation2}
a_{i}^{-}b_{i}^{-} &= q^2b_{i}^{-}a_{i}^{-},& c_{i}^{-}a_{i}^{-} &= qa_{i}^{-}c_{i}^{-},& c_{i}^{-}b_{i}^{-} &= q^2b_{i}^{-}c_{i}^{-}
\end{align}
Here $q$ can be thought of as a \textit{skew-commutator} for each $\mathcal{R}-$algebra, but for any two elements $x_i,y_j\in \mathcal{A}_{\epsilon}$ where $i\neq j$ we have: \begin{equation}
\label{relation3}
x_i y_j=y_j x_i
\end{equation}

The relationship between the walks introduced in the previous section and elements of the algebra $\mathcal{A}_{\epsilon}$ will be made explicit in section~\ref{CJP}. For the rest of the paper we fix  $\epsilon$ to be the sequence $(\epsilon_1,\epsilon_2,\ldots,\epsilon_k)$ and refer to the algebra associated to $\epsilon$ by $\mathcal{A}_{\epsilon}$ as defined above.

\subsection{Right Quantum Words in $\mathcal{A}_{\epsilon}$}
\label{sssec:RQWord}

Relations~\ref{relation1},~\ref{relation2} and~\ref{relation3} are utilized in our algorithm to convert any word in the algebra \(\mathcal{A}_{\epsilon}\) to a standardized word that will facilitate the computation of the colored Jones polynomial. 


 A \textit{word} in $\mathcal{A}_{\epsilon}$ is a finite product of elements from $\mathcal{L}_{\epsilon}$. A monomial in $\mathcal{A}_{\epsilon}$ is a product $ \gamma W$ where $\gamma \in \mathcal{R}$ and $W$ is a word in $\mathcal{A}_{\epsilon}$. A word $W$ in $\mathcal{A}_{\epsilon}$ is said to be \textit{a right quantum} if it has the form: 
%
\begin{equation*}
W= W^{+} W^{-}
\end{equation*} 
where
\begin{equation}
\label{plus}
W^{+}=
\left(b_{i_1}^{+}\right)^{s_1}\left(c_{i_1}^{+}\right)^{r_1}\left(a_{i_1}^{+}\right)^{d_1} \cdots \left( b_{i_u}^{+}\right)^{s_u}\left(c_{i_u}^{+}\right)^{r_u}\left(a_{i_u}^{+}\right)^{d_u},
\end{equation}
\begin{equation}
\label{min}
W^{-}=
\left(b_{j_1}^{-}\right)^{s^{\prime}_1}\left(c_{j_1}^{-}\right)^{r^{\prime}_1}\left(a_{j_1}^{-}\right)^{d^{\prime}_1} \cdots \left( b_{j_v}^{-}\right)^{s^{\prime}_v}\left(c_{j_v}^{-}\right)^{r^{\prime}_v}\left(a_{j_v}^{-}\right)^{d^{\prime}_{v}}
\end{equation}
such that $1 \leq i_1< i_2 < \cdots < i_u \leq k $, and $ 1\leq j_1< \cdots < j_{v} \leq k $. By convention the empty word will be assumed to be right quantum.  Algorithm~\ref{MB} summarizes the first step in the conversion of a word to a right quantum word. 

~

\begin{algorithm}[H]
    \label{MB}
		\KwIn{ A list of exponents of $a_i's$ ordered by crossing number\;
A list of exponents of $b_j's$ ordered by crossing number\;
A list of exponents of $c_k's$ ordered by crossing number\;
}
		\KwOut{ The right quantum form of the word with those exponents.}
        \vspace{1mm}
Start with an empty word $W$\;
For each exponent $i_j$ in the $b_j$ exponent list, append $b_j$ to the right of the word $W,$ $i_j$ times\;
For each exponent $i_j$ in the $c_j$ exponent list, append $c_j$ to the right of the word $W,$ $i_j$ times\;
For each exponent $i_j$ in the $a_j$ exponent list, append $a_j$ to the right of the word $W,$ $i_j$ times\;
		\caption{Monomial Builder (MB)}
\end{algorithm}

Now let $U$ be an arbitrary word in $\mathcal{A}_{\epsilon}$.  Using the relations~\ref{relation1},~\ref{relation2} and~\ref{relation3} we can write:
\begin{equation}
\label{quantum}
U=  q^{z} U^{\prime},
\end{equation}
where $U^{\prime}$ is a right quantum word and $z$ is an integer. We call the word $U^{\prime}$ the quantum word associated with the word $U$. Using this convention, we define the \textit{word vector} of $\mathcal{V}(U)$ as the sequence of integers :
\begin{equation}
\mathcal{V}(U)=(z,s_1,r_1,d_1,\cdots,s_u,r_u,d_u,s^{\prime}_1,r^{\prime}_1,d^{\prime}_1,\cdots,s^{\prime}_u,r^{\prime}_v,d^{\prime}_v).
\end{equation}


Note here that the exponents $r_i$, $s_i$ and $d_i$ are simply the number of times each variable $b^{+}_i$, $c^{+}_i$ and $d^{+}_i$ occurs in the word $U$. Hence these numbers can be computed simply without any consideration of the algebra relations. The same holds for determining the exponents $r^{\prime}_i$, $s^{\prime}_i$ and $d^{\prime}_i$. On the other hand, in order to compute the exponent $z$ the word $U$ must be converted to right quantum form using the algebra relations. We will come back to this fact later while describing the main algorithm.

\begin{remark}
Since any monomial is a scalar multiple of a word, the definition of quantum word can be also defined on monomials. For this reason the technical difference between words and monomials is not essential and  in our discussion below we use these two terms interchangeably.  
\end{remark}

\begin{remark}
Note that the weight of an arbitrary walk along a braid is simply a word in $\mathcal{A}_{\epsilon}$. For our purposes, we are not interested in all words in $\mathcal{A}_{\epsilon}$, instead we are merely concerned about words that can be realized as a weight of walk or a product of such words. This fact will be utilized in Section~\ref{Efficient Evaluation} in order to reduce the calculations needed for evaluation of these words as elements in $\mathcal{R}$.
\end{remark}

\section{The Evaluation Map $\mathcal{E}_N$ }
\label{map}
Here we introduce the $N$-evaluation map $\mathcal{E}_N$ that operates on  elements in $\mathcal{A}_\epsilon$ and returns a polynomial in $\mathcal{R}$. Since the purpose of this paper is to give an algorithm to compute the colored Jones polynomial we will solely give the axioms that are necessary to calculate the map $\mathcal{E}_N$. The reader further interested in this map is referred to the paper~\cite{lee2010introduction} for an equivalent set of axioms and their full construction. 

\begin{definition}
\label{right quantum}
For $N\geq 1,$ the function $\mathcal{E}_N : \mathcal{A}_\epsilon \longrightarrow \mathcal{R}$ is defined via the following axioms:

\begin{enumerate}
\item For any $f,g \in \mathcal{A}_\epsilon$,
\begin{equation*}
\mathcal{E}_N(f+g)=\mathcal{E}_N(f)+\mathcal{E}_N(g).
\end{equation*}
\item For any $c \in \mathcal{R}$ and $f \in \mathcal{A}_{\epsilon}$,
\begin{equation*}
\mathcal{E}_N(cf)=c\,\mathcal{E}_N(f).
\end{equation*}
\item For any two monomials $f,g  \in \mathcal{A}_{\epsilon}$ that are \emph{separable}, which occurs when the monomial $f$ only contains the letters $a_i,b_i,c_i$ with $i\in I$ and the monomial $g$ has only letters $a_j, b_j, c_j$ with $j\in J$ where $I\cap J=\emptyset$, we have 
\begin{equation*}
\mathcal{E}_N(fg)=\mathcal{E}_N(f)\cdot\mathcal{E}_N(g).
\end{equation*}
\item $\mathcal{E}_N \left(\left(b_i^+\right)^s \left(c_i^+\right)^r \left(a_i^+\right)^d\right) = q^{r(N-1-d)} \displaystyle{\prod_{j=0}^{d-1} \left(1 - q^{N-1-r-j}\right)}.$
\item $\mathcal{E}_N \left(\left(b_i^-\right)^s \left(c_i^-\right)^r \left(a_i^-\right)^d\right)= q^{-r(N-1)} \displaystyle{\prod_{j=0}^{d-1}  \left(1 - q^{r+j+1-N} \right)}$.
\end{enumerate}
\end{definition}
\noindent Now let $C $ be an arbitrary element in $\mathcal{A}_{\epsilon}$. We are interested in the explicit evaluation of $\mathcal{E}_N(C)$. To this end, observe first that the element $C$ can be written in the form $C=W_1+\cdots +W_t$
where $W_i$ is a monomial in $\mathcal{A}_{\epsilon}$  for $1 \leq  i \leq t$. The computation of the colored Jones polynomial in our algorithm relies essentially on the computation of $\mathcal{E}_N(C)$. In the case of the colored Jones polynomial the element $C$ is not just an arbitrary element in the algebra $\mathcal{A}_{\epsilon}$. Specifically, the set  $\mathcal{W}\coloneqq\{W_i \}_{i=1}^t$ possesses a structure with respect to the evaluation map $\mathcal{E}_N$ that allows for its efficient computation. We will introduce this structure along with the method to evaluate $\mathcal{E}_N$ on the special types of elements $C \in \mathcal{A}_{\epsilon}$ in Section~\ref{E section} after we give the definition of the colored Jones polynomial in terms of the evaluation map. 

\section{Quantum determinant}
Let $a,b,c,d$ be elements of the noncommutative ring $\mathcal{B}.$
A $2 \times 2 $ matrix $
\begin{bmatrix}
 a & b \\
c & d \\
\end{bmatrix}
\in M_{2}(\mathcal{B})$
 is said to be right quantum if :
\begin{enumerate}
\item $ac = qca$
\item $bd = qdb$
\item $ad = da+qcb-q^{-1}bc$
\end{enumerate}
An $m\times m$ matrix is said to be right quantum if all its $2 \times 2$ submatrices are right quantum. If $A$ is a right quantum matrix then the quantum determinant of $A$ is given by :
\begin{equation}
\label{eqn:qdet}
\dett_q(A)=\sum_{\pi \in \Sym(m)} (-q)^{\inv(\pi)} a_{\pi(1),1}\cdots a_{\pi(m),m} 
\end{equation}
where $\inv(\pi)$ is the number of inversions in the permutation $\pi$.

Let $A$ be an $m\times m$ matrix and let $\emptyset\neq J \subseteq \{1,\ldots,m\}$. The matrix $A_J$ is the $J\times J$ submatrix of $A$ obtained by selecting the rows and columns of $A$ that correspond to the set $J$. Note that if $A$ is right quantum then $A_J$ is also right quantum. If $A$ is a right quantum matrix then define $C_A$ by 
\begin{equation}
\label{eqn:altSum}
C_A\coloneqq\sum_{\emptyset \neq J \subset \{1,\ldots,m\} } (-1)^{|J|-1 } \dett_q(A_J)
\end{equation}

\section{Deformed Burau Matrix}
\label{ssec:deformedB}
The deformed Burau matrix associated to a braid is defined similar to the Burau Matrix~\cite{bmma82x}. Define the matrices

\begin{align*}
&&
S^+&\coloneqq
\begin{bmatrix}
 a^+ & b^+ \\
c^+ & 0 \\
\end{bmatrix}&
S^-&\coloneqq
\begin{bmatrix}
 0 & c^- \\
b^- & a^- \\
\end{bmatrix}
&&
\end{align*}

  
To every braid generator $\sigma_{i_j}^{\epsilon_j}$ in Artin's standard presentation we associate an $m \times m$ right-quantum matrix $A_j$ which is the identity matrix except at the submatrix of rows $i_j$, $i_j + 1$ and
columns $i_j$, $i_j + 1$ which we replace by $S_{j}^{\epsilon_j}$. The matrix $S_{j}^{\epsilon_j}$ is the same as the matrix $S^{\epsilon}$ but $a^{\epsilon_j}$, $b^{\epsilon_j}$ and $c^{\epsilon_j}$ are replaced with $a_{j}^{\epsilon_j}$, $b_{j}^{\epsilon_j}$ and $c_{j}^{\epsilon_j}$ respectively.
The matrix $A_j$ is called the \textit{deformed Burau matrix associated with the generator $\sigma_{i_j}^{\epsilon_j}$}.
For a braid $\beta$ given as   $\sigma_{i_1}^{\epsilon_1}\sigma_{i_2}^{\epsilon_2}\cdots\sigma_{i_k}^{\epsilon_k}$, the deformed Burau matrix $\rho(\beta)$ is defined by the multiplication of its corresponding deformed Burau matrices:  $A_1A_2\cdots A_k$.  The reduced deformed Burau matrix $\rho^{\prime}(\beta)$ is obtained from $\rho(\beta)$  by dropping both the first row and first column. Note that $\rho(\beta)$ is a right quantum matrix, so $\rho^{\prime}(\beta)$ is as well.

Looking back at Figure~\ref{paths_weight}, one can see how for a positive crossing $\alpha_i=(j,+1)$ the $a^+_i$ weight corresponds to a path moving from the bottom of the diagram to the top by following from $j^{th}$ position on the overcrossing strand and jumping to the $j^{th}$ position on the undercrossing strand, the $b^+_i$ weight corresponds to a path that moves from the $(j+1)^{st}$ position to the $j^{th}$ position by following the undercrossing strand and the $c^+_i$ weight corresponds to a path that moves from the $j^{th}$ position to the $(j+1)^{st}$ position by following the overcrossing strand. Similarly, for negative crossings, the matrix elements of the matrix, $A_j$, correspond to the transition from the bottom strand (column) of a crossing to the top strand (row).


\section{The Colored Jones Polynomial}
\label{CJP}
In this section we give the definition of the colored Jones polynomial using the Burau representation we described in the previous section. We recall some facts about the Jones and colored Jones polynomial first. The Jones polynomial is a Laurent polynomial knot invariant in the variable $q$ with integer coefficients. The Jones polynomial generalizes to an invariant $J_{N,K}(q)\in \mathbb{Z}[q^{\pm 1}]$ of a knot $K$ colored by the $N^{th}$ irreducible representation of $\mathfrak{sl}(2,\mathbb{C})$ and normalized so that $J_{N,O}(q)=1$, where $O$ denotes the unknot. Note that this is the unframed normalized version of the colored Jones polynomial. The original Jones polynomial corresponds to the case $N=2$. In~\cite{huynh2005colored} it was shown that the colored Jones polynomial of a knot can be computed in terms of the quantum determinant of the deformed Burau representation of a braid $\beta$ with $\hat{\beta}=K$. We recall the statement of this theorem here since it will be utilized in our algorithm.  

\begin{theorem}[{\cite[Theorem~1]{huynh2005colored}}]
\label{main theorem}
Suppose the closure in the standard way of the $m$-strand braid $\beta (\alpha)$ is a knot $K$. Then for any positive integer $N \geq 1 $ we have


\begin{equation*}
J_{N,K}(q) = q^{(N-1)(\omega(\beta)-m+1)/2} \sum_{i=0}^{\infty} \mathcal{E}_N\left( C_{q\rho^{\prime}(\beta)}^i\right)    
\end{equation*}
where $\omega(\beta)=\sum\epsilon_j$ is the writhe of the knot and $m$ is the number of strands in its braid representation.
\end{theorem}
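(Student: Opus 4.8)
The statement is quoted from \cite{huynh2005colored} (with the reformulation via $C_A$ and the walk-weight conventions of Armond), so the plan is not to re-derive the Huynh--L\^e formula from scratch but to reconcile it with the combinatorial apparatus assembled in Sections \ref{sec:Walks}--\ref{CJP}. Concretely, I would first recall the Huynh--L\^e theorem in its original form: the colored Jones polynomial is, up to the framing factor $q^{(N-1)(\omega(\beta)-m+1)/2}$, the value of a suitable ``$N$-trace'' applied to $(I - \rho'(\beta))^{-1}$, where $\rho'(\beta)$ is the reduced deformed Burau matrix. The key algebraic input is that $\rho'(\beta)$ is a right quantum matrix (stated at the end of Section \ref{ssec:deformedB}), so that its quantum minors, and hence $C_{q\rho'(\beta)}$, are well-defined elements of $\mathcal{A}_\epsilon$ on which the evaluation map $\mathcal{E}_N$ of Definition \ref{right quantum} can be applied.

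Next I would expand the geometric series: since $J_{N,K}$ is governed by $\det_q(I - q\rho'(\beta))^{-1}$ in the quantum-determinant sense, I invoke the right-quantum Cramer/Jacobi identity that expresses this inverse determinant as $\sum_{i \ge 0} C_{q\rho'(\beta)}^{\,i}$, where $C_A = \sum_{\emptyset \ne J \subseteq \{1,\dots,m\}} (-1)^{|J|-1}\det_q(A_J)$ as in \eqref{eqn:altSum}. This is the step where the alternating sum over nonempty subsets $J$ of rows/columns enters; it is precisely the expansion $\tfrac{1}{\det_q(I-A)} = \sum_{i\ge 0}\big(\sum_{\emptyset\ne J}(-1)^{|J|-1}\det_q(A_J)\big)^i$, valid because each entry of $A = q\rho'(\beta)$ lies in the augmentation ideal (every monomial has positive length in the letters $a_i,b_i,c_i$), so the sum is finite in each total degree and $\mathcal{E}_N$ applied term-by-term converges. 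I would then apply $\mathcal{E}_N$, using its additivity and $\mathcal{R}$-linearity (axioms (1)--(2)) to push it inside the sum, obtaining $\sum_{i\ge 0}\mathcal{E}_N(C_{q\rho'(\beta)}^{\,i})$.

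The remaining work is to identify the terms produced this way with the walk model of Section \ref{sec:Walks}: a monomial appearing in $\det_q(A_J)$ corresponds to choosing, for each $j\in J$, a path from strand $j$ to strand $\pi(j)$ built out of the entries $a_i^{\epsilon_i}, b_i^{\epsilon_i}, c_i^{\epsilon_i}$ of the factor matrices $A_1\cdots A_k$, with the sign $(-q)^{\inv(\pi)}$ from \eqref{eqn:qdet} and the extra $(-1)^{|J|-1}$ from \eqref{eqn:altSum} combining into the walk weight $(-1)(-q)^{|J|+\inv(\pi)}$ (the stray factor of $q^{|J|}$ coming from the $q$ in $q\rho'(\beta)$, one for each strand of $J$), and a power $i$ of $C_A$ corresponding to a stack of $i$ walks. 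Matrix multiplication $A_1\cdots A_k$ exactly records a path's sequence of local moves crossing by crossing (jump down, follow bottom, follow top), and the noncommutativity of $\mathcal{A}_\epsilon$ is inherited from that of $M_m(\mathcal{A}_\epsilon)$, so the bookkeeping matches. Finally, axioms (4)--(5) of $\mathcal{E}_N$ are checked to agree with the local quantum-trace values computed in \cite{huynh2005colored,lee2010introduction}; together with multiplicativity on separable monomials (axiom (3)), this pins down $\mathcal{E}_N$ on all of $\mathcal{A}_\epsilon$, so the value we compute coincides with theirs.

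The main obstacle is the middle step: justifying the geometric-series expansion of the inverse quantum determinant in a noncommutative ring and checking that the $C_A$-formula \eqref{eqn:altSum} is the correct ``numerator'' in this expansion. This is where right-quantumness of $\rho'(\beta)$ is essential — it is exactly the hypothesis under which quantum minors behave well enough for a quantum Jacobi/MacMahon-type identity to hold — and where one must be careful that all manipulations respect the noncommutative order of factors. Once that identity is in hand, pulling $\mathcal{E}_N$ through the sum and matching with walks is routine, if notationally heavy.
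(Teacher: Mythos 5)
The paper does not prove this statement at all: Theorem~\ref{main theorem} is imported verbatim by citation from Huynh--L\^e (\cite[Theorem~1]{huynh2005colored}), and the only added commentary is Remark~\ref{cody thm}, which reinterprets $C_{q\rho^{\prime}(\beta)}$ and its powers via Armond's walks. So there is no in-paper argument to compare yours against; what you have written is an outline of the proof as it lives in the cited sources, and as such it follows the right route. Your three stages --- (i) the Huynh--L\^e formula as an evaluation of the inverse quantum determinant of $I-q\rho^{\prime}(\beta)$, with right-quantumness of $\rho^{\prime}(\beta)$ guaranteeing the quantum minors are well defined, (ii) the expansion $\det_q(I-A)^{-1}=\sum_{i\ge 0}C_A^{\,i}$ with $C_A$ as in \eqref{eqn:altSum}, which is exactly the quantum MacMahon-type identity underlying Huynh--L\^e's derivation, and (iii) the dictionary between monomials of $\det_q(A_J)$ and walks, including the correct bookkeeping $(-1)^{|J|-1}q^{|J|}(-q)^{\inv(\pi)}=(-1)(-q)^{|J|+\inv(\pi)}$ --- match the structure of the actual proof and of Armond's reinterpretation, and you correctly flag (ii) as the genuine crux rather than routine.

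Two small cautions. First, your justification of convergence (``every entry lies in the augmentation ideal'') needs the hypothesis that $\hat\beta$ is a knot: only then does every strand $j\ge 2$ meet a crossing, so that no constant monomials appear in $\rho^{\prime}(\beta)$; and in any case the finiteness of the displayed sum as a Laurent polynomial is ultimately secured by the vanishing $\mathcal{E}_N\bigl(C_{q\rho^{\prime}(\beta)}^{\,i}\bigr)=0$ for $i$ large (the phenomenon behind Lemma~\ref{lemma:DRL}.\ref{lemma:DRLp2} and the termination of Algorithm~\ref{The Colored Jones Polynomial}), not by formal degree counting alone. Second, axioms (4)--(5) of Definition~\ref{right quantum} are taken in this paper as the definition of $\mathcal{E}_N$ (with \cite{lee2010introduction} cited for an equivalent axiom set), so ``checking they agree with the quantum-trace values'' is a verification you would have to carry out against the original construction; the paper deliberately does not do this, and a complete write-up along your lines would have to.
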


\begin{remark}
\label{cody thm}
In~\cite{armond2011rogers} Armond interpreted the polynomial $ C_{\rho^{\prime}(\beta)}$ from \cref{eqn:altSum} as a sum of the weights of walks on $\beta(\alpha)$ with $J  \subset\{2,\ldots,m\}$. Noting that this deformed Burau representation corresponds to removing the first strand as either a starting or ending point, but allowing it to be traversed midbraid. In this interpretation, it is natural to understand higher powers $C_{\rho^{\prime}(\beta)}^i$ as a stack of superimposed walks with $J  \subset\{2,\ldots,m\}$. We elaborate on the idea of a stack of walks further in section~\ref{Efficient Evaluation}.
\end{remark}



\begin{example}
\label{ex:Fig8}
 Let $\beta = ((1, -),(2, +),(1, -),(2, +))$ be the braid $\sigma_1^{-1} \sigma_2 \sigma_1^{-1} \sigma_2$ in $B_3$ ( see Figure~\ref{braid8}). 

\begin{figure}[h]
  \centering
   {\includegraphics[scale=0.38]{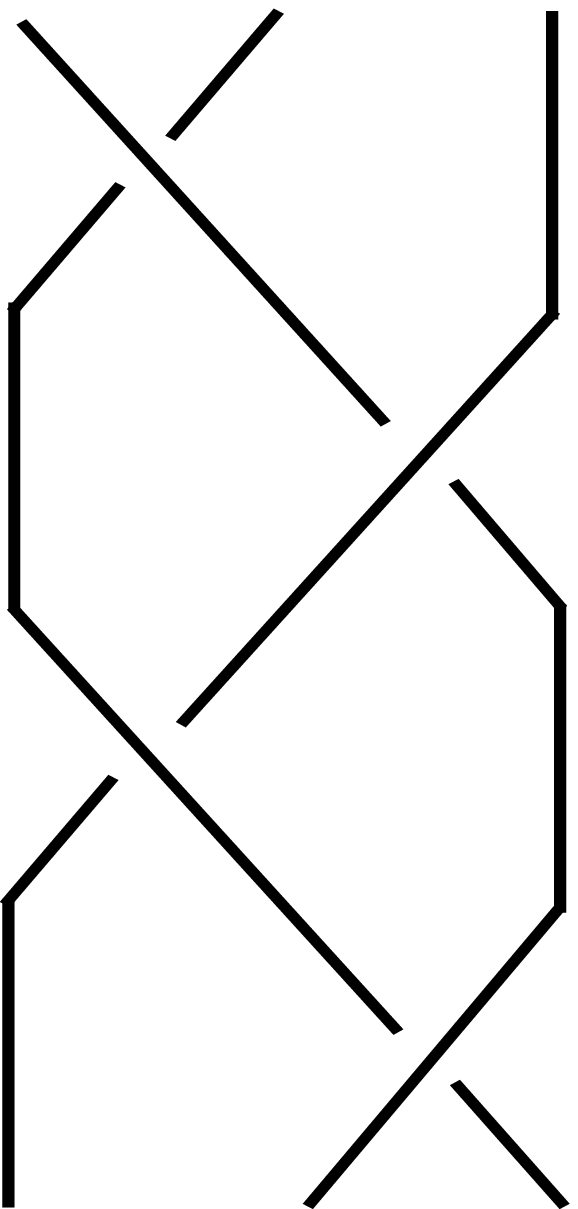}
    \caption{The braid $\sigma_1^{-1} \sigma_2 \sigma_1^{-1} \sigma_2$ in $B_3$ .}
  \label{braid8}}
\end{figure}
 
 The closure of $\beta$ gives the figure eight knot. Moreover, $m = 3$ and $\omega(\beta) = 0$. Computing the deformed Burau representation of $\beta$ gives the right-quantum matrix:

\begin{small}
\begin{eqnarray*}
\rho(\beta)&=& \rho(\sigma_1^{-1})\rho(\sigma_2)\rho(\sigma_1^{-1})\rho(\sigma_2) \\&=&\left( \begin{array}{ccc}

0 & c_1^- & 0 \\
b_1^{-}& a_1^- & 0 \\
0 &0 & 1 \\
\end{array}\right)\left( \begin{array}{ccc}
1 & 0 & 0 \\
0& a_2^+ & b_2^{+} \\
0 &c_2^+ & 0 \\
\end{array}\right)\left( \begin{array}{ccc}
0 & c_3^- & 0 \\
b_3^-& a_3^- & 0 \\
0 &0 & 1 \\
\end{array}\right)\left( \begin{array}{ccc}
1 & 0 & 0 \\
0& a_4^+ & b_4^{+} \\
0 &c_4^+ & 0 \\
\end{array}\right)  \\&=&  
\left(
\begin{array}{ccc}
 \vspace{1mm}
c^{-}_1a^+_2b^{-}_3 & c^{-}_1b^+_2c_4+c^{-}_1a^+_2a^{-}_3a^+_4 & c^{-}_1a^+_2a^{-}_3b^+_4 \\
 \vspace{1mm}
a^{-}_1a^+_2b^{-}_3 & a^{-}_1b^+_2c^+_4+b^{-}_1c^{-}_3a^+_4+a^{-}_1a^+_2a^{-}_3a^+_4 & b^{-}_1c^{-}_3b^+_4+a^{-}_1a^+_2a^{-}_3b^+_4 \\
 c^+_2b^{-}_3 & c^+_2a^{-}_3a^+_4 & c^+_2a^{-}_3b^+_4 \\
\end{array}
\right)
\end{eqnarray*}
\end{small}

Hence the reduced Burau matrix is given by :

\begin{small}
\begin{equation}
\rho^{\prime}(\beta)=
\left(
\begin{array}{cc}
\vspace{1mm}
a^{-}_1b^+_2c^+_4+b^{-}_1c^{-}_3a^+_4+a^{-}_1a^+_2a^{-}_3a^+_4 & b^{-}_1c^{-}_3b^+_4+a^{-}_1a^+_2a^{-}_3b^+_4 \\
c^+_2a^{-}_3a^+_4 & c^+_2a^{-}_3b^+_4 
\end{array}
\right)
\end{equation}
\label{eqn:reducedBurau}
\end{small}

Using the quantum determinant formula one obtains the following summation of 
walks:

\begin{align}
C_{q\rho^{\prime}(\beta)}^1=&\; q^3 c_{2}^{+}a_{3}^{-}a_{4}^{+}a_{1}^{-}a_{2}^{+}a_{3}^{-}b_{4}^{+} - q^2 a_{1}^{-}a_{2}^{+}a_{3}^{-}a_{4}^{+}c_{2}^{+}a_{3}^{-}b_{4}^{+} + q a_{1}^{-}a_{2}^{+}a_{3}^{-}a_{4}^{+}\nonumber\\
& + q^3 c_{2}^{+}a_{3}^{-}a_{4}^{+}b_{1}^{-}c_{3}^{-}b_{4}^{+} - q^2 b_{1}^{-}c_{3}^{-}a_{4}^{+}c_{2}^{+}a_{3}^{-}b_{4}^{+} + q b_{1}^{-}c_{3}^{-}a_{4}^{+} \label{eqn:exampleWalks}\\
&- q^2 a_{1}^{-}b_{2}^{+}c_{4}^{+}c_{2}^{+}a_{3}^{-}b_{4}^{+} + q c_{2}^{+}a_{3}^{-}b_{4}^{+} + q a_{1}^{-}b_{2}^{+}c_{4}^{+}\nonumber 
\end{align}

Thus the colored Jones polynomial is given by:

\begin{equation*}
J_{N,\hat{\beta}}(q)=q^{(N-1)}\sum_{i=0}^{\infty}  \mathcal{E}_N( C_{q\rho^{\prime}(\beta)}^i)
\end{equation*}

\end{example}

\section{Evaluation of the Map $\mathcal{E}_N$}
\label{E section}
A computation bottleneck in the main algorithm of the CJP lies in evaluating $\mathcal{E}_N$ for elements $C \in \mathcal{A}_{\epsilon}.$
From the Theorem~\ref{main theorem} point of view it seems that this computation comes down to computing $\mathcal{E}_N(C^i_{q\rho^{\prime}(\beta)})$ where each $C^i_{q\rho^{\prime}(\beta)}$ is obtained from the quantum determinant.
However this naive approach is far from efficient. Our initial implementation using this interpretation could not practically perform calculations for some knots of index 10.
But, as mentioned in Remark~\ref{cody thm} the polynomial $C^1_{q\rho^{\prime}(\beta)}$ can be understood as a sum of the weights of walks on the braid. In this section we illustrate how this sum can be simplified to only require simple walks.
In other words, all non-simple walks can be ignored from the computation of $C^1_{q\rho^{\prime}(\beta)}$ entirely.

For simplicity we will be considering the evaluation of $\mathcal{E}_N(C)$ on an arbitrary element $C\in \mathcal{A}_{\epsilon}$.
We then focus on the special cases where $C$ is solely generated by walks. To this end, let 
\begin{equation*}
C=W_1+\cdots +W_t,
\end{equation*}
where each $W_i$ is an individual monomial in $\mathcal{A}_{\epsilon}$  for $1 \leq  i \leq t$.   
The polynomial evaluation of $\mathcal{E}_N(C)$ simplifies to:
\begin{equation}
\mathcal{E}_N(C)=\mathcal{E}_N(W_1)+\cdots+\mathcal{E}_N(W_t)
\end{equation}
following axiom $1$ of the evaluation map $\mathcal{E}_N$. Moreover, any monomial $W_i$ can be written as $W_i= \gamma W^{\prime}_i$ where $W^{\prime}_i$ is a word and $\gamma \in \mathcal{R}$. Hence by axiom 2 of the evaluation map $\mathcal{E}_N$ we have $$ \mathcal{E}_N(W_i)=\gamma \mathcal{E}_N(W^{\prime}_i). $$  Hence the problem of evaluating $\mathcal{E}_N(C)$ is reduced to the calculation of $\mathcal{E}_N$ on each word $W^{\prime}_i$ in $\mathcal{A}_{\epsilon}$. 
\subsection{Evaluation of Words in $\mathcal{A}_{\epsilon}$ }
We now describe the evaluation of an arbitrary word $W$ in $\mathcal{A}_{\epsilon}$.

\begin{lemma}
\label{explicit}
Let $W$ be an arbitrary word in $\mathcal{A}_{\epsilon}$ and let 
\begin{equation*}
\mathcal{V}(W)=( z, s_1, r_1, d_1, \cdots, s_u, r_u, d_u, s^{\prime}_1, r^{\prime}_1, d^{\prime}_1, \cdots, s^{\prime}_v, r^{\prime}_v, d^{\prime}_v)
\end{equation*}
be the word vector of $W$. Then
\begin{small}
\begin{equation*}
\mathcal{E}_N(W)=q^{z}  q^{\sum_{i=1}^u (N-1-d_i)r_i}  q^{ \sum_{j=1}^v (N-1)(-r_j)}  \prod_{i=1}^u \prod_{h=0}^{d_i-1}\left(1 - q^{N-1-r_i-h}\right) \prod_{j=1}^v  \prod_{l=0}^{d^\prime_j-1}  \left(1 - q^{r^{\prime}_j+l+1-N} \right).
\end{equation*}
\end{small}
\end{lemma}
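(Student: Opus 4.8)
The plan is to reduce the evaluation of an arbitrary word $W$ to the evaluation map applied to a single right quantum word, and then invoke the explicit axioms 4 and 5 of $\mathcal{E}_N$ together with the multiplicativity axiom 3. First I would recall from Section~\ref{sssec:RQWord}, equation~\eqref{quantum}, that the relations~\eqref{relation1},~\eqref{relation2},~\eqref{relation3} allow us to write $W = q^{z} W'$ where $W'$ is right quantum and $z$ is the first entry of the word vector $\mathcal{V}(W)$; moreover, by the definition of the word vector, the exponents $s_i, r_i, d_i$ and $s'_j, r'_j, d'_j$ count the occurrences of each letter in $W$ and are precisely the exponents appearing in the standard form $W' = W'^{+}W'^{-}$ of~\eqref{plus} and~\eqref{min}. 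By axiom 2 of $\mathcal{E}_N$, $\mathcal{E}_N(W) = q^{z}\mathcal{E}_N(W')$, so it suffices to evaluate $\mathcal{E}_N(W')$.

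Next I would decompose $W' = W'^{+}W'^{-}$ into its blocks indexed by the distinct crossing indices $i_1 < \cdots < i_u$ (positive part) and $j_1 < \cdots < j_v$ (negative part). Since letters with distinct indices commute by~\eqref{relation3} and, more importantly, are \emph{separable} in the sense of axiom 3, I would apply axiom 3 repeatedly to split $\mathcal{E}_N(W')$ into the product over all these single-index blocks:
\begin{equation*}
\mathcal{E}_N(W') = \prod_{p=1}^{u}\mathcal{E}_N\!\left(\left(b_{i_p}^{+}\right)^{s_p}\left(c_{i_p}^{+}\right)^{r_p}\left(a_{i_p}^{+}\right)^{d_p}\right)\cdot\prod_{p=1}^{v}\mathcal{E}_N\!\left(\left(b_{j_p}^{-}\right)^{s'_p}\left(c_{j_p}^{-}\right)^{r'_p}\left(a_{j_p}^{-}\right)^{d'_p}\right).
\end{equation*}
One small point to check here: axiom 3 is stated for a pair of separable monomials, so strictly one argues by induction on $u+v$, peeling off one block at a time and verifying that the remaining product is still a monomial involving a disjoint index set. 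Then I would substitute the closed forms from axioms 4 and 5 for each factor: the $p$-th positive block contributes $q^{r_p(N-1-d_p)}\prod_{h=0}^{d_p-1}(1-q^{N-1-r_p-h})$ and the $p$-th negative block contributes $q^{-r'_p(N-1)}\prod_{l=0}^{d'_p-1}(1-q^{r'_p+l+1-N})$. Collecting the prefactors $q^{z}$, $\prod_p q^{r_p(N-1-d_p)} = q^{\sum_{i=1}^{u}(N-1-d_i)r_i}$, and $\prod_p q^{-r'_p(N-1)} = q^{\sum_{j=1}^{v}(N-1)(-r_j)}$, and the two double products, yields exactly the claimed formula.

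I do not expect any serious obstacle — this lemma is essentially a bookkeeping consequence of the axioms — but the one genuine step requiring care is the justification that $\mathcal{V}(W)$ really does record the correct exponents, i.e.\ that converting $W$ to right quantum form via the relations does not alter the \emph{count} of each generator but only produces the scalar $q^{z}$. This follows because each relation in~\eqref{relation1}--\eqref{relation3} merely transposes two adjacent letters at the cost of a power of $q$ and never creates or destroys a letter; hence the multiset of letters is invariant and the exponents in the standard form are the raw occurrence counts, as noted in the remark following equation~\eqref{quantum}. The only other thing to be careful about is the inductive application of the separability axiom, which as described above is routine once one observes that the index sets of the blocks are pairwise disjoint by construction of the right quantum form.
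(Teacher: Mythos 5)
Your proposal is correct and follows essentially the same route as the paper's own proof: write $W = q^{z}W'$ with $W'$ right quantum, apply axiom 2, split $W'=(W')^{+}(W')^{-}$ into separable blocks via axiom 3, and evaluate each block with axioms 4 and 5. Your extra remarks on the induction over blocks and on why the exponents are just raw letter counts only make explicit what the paper leaves implicit.
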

\begin{proof}
Equation~\ref{quantum} implies that $W= q^{z} W^{\prime}$, where $z$ is an integer and $W^{\prime}$ is a right quantum word. Hence, by axiom $2$ of the evaluation map $\mathcal{E}_N$, \begin{equation*}
\mathcal{E}_N(W)=q^{z}\mathcal{E}_N(W^{\prime}).
\end{equation*}
Moreover, since $W^{\prime}$ is a right quantum word, $W^{\prime}=(W^{\prime})^{+} (W^{\prime}) ^{-}$. However, the words $(W^{\prime})^{+} $ and $(W^{\prime})^{-} $ are separable. Hence, axiom $3$ of $\mathcal{E}_N$ implies that: 
\begin{equation*}
\mathcal{E}_N\left(W^{\prime}\right)=\mathcal{E}_N\left((W^{\prime})^{+}\right)\mathcal{E}_N\left((W^{\prime})^{-}\right).
\end{equation*}
By identities~\ref{plus} and~\ref{min}, and axioms $4$ and $5$ of the function $\mathcal{E}_N$, we obtain the result.
\end{proof}

Lemma~\ref{explicit} nearly allows any word $W$ to be evaluated explicitly by simply counting the number of times each of the $a_i's$, $b_j's$ and $c_k's$ occurs in the word $W$ and applying the formula. The only problem is computing of the power $z,$ which  requires the algebra relations as we mentioned before. From relations~\ref{relation1},~\ref{relation2} and~\ref{relation3} we find that the $z$ can be computed given the following numbers :
\begin{enumerate}
\item The total of the number of times each $a^+_i$ appears to the left of each
$c^+_i$ in the word $W$.
\item The total of the number of times each $c^+_i$ appears to the left of each
$b^+_i$ in the word $W$.
\item The total of the number of times each $a^-_i$ appears to the left of each
$b^-_i$ in the word $W$.
\item The total of the number of times each $a^-_i$ appears to the left of each
$c^-_i$ in the word $W$.
\item The total of the number of times each $c^-_i$ appears to the left of each
$b^-_i$ in the word $W$.
\end{enumerate}
We will use the following convention to refer to the previous numbers. Let $W$ be a word and $x$ and $y$ be two letters in $W$. We denote by $B(x,y)(W)$
the total of the number of times each letter $x$ appears to the left of the letter $y$ in the word $W$.
When $W$ is clear from the context we will use the notation $B(x,y)$ instead. 
Using this convention and the relations~\ref{relation1},~\ref{relation2} and~\ref{relation3} we can write :
\begin{small}
\begin{eqnarray}
\label{z}
z=\sum_{i=1}^u B(a^+_i,c^+_i) -2  B(c^+_i,b^+_i) +  \sum_{j=1}^v 2B(a^-_j,b^-_j) -  B(a^-_j,c^-_j) +2 B(c^-_j,b^-_j)
\end{eqnarray}
\end{small}

Algorithm~\ref{BMEC} summarizes this process of calculating the word vector, $\mathcal{V}(W),$ of each word $W$ in $\mathcal{A}_{\epsilon}$.


\begin{algorithm}[H]
    \label{BMEC}
		\KwIn{ A word $W$ in $q's$, $a_i's$, $b_j's$ , $c_k's$ \;
        The number of crossings in a braid\;
        The sign of each crossing\;
        The $q$-value used in defining $\mathcal{A}_\epsilon$ .}
		\KwOut{ The $q$-coefficient of the word $W$ in the right quantum form\;
The list of exponents of $a_i's$\;
The list of exponents of $b_j's$\; 
The list of exponents of $c_l's$.} 
		\vspace{1mm}
Count the the number of $a_i's, b_j's,$ or $c_l's$ passed while traversing the word letter by letter\;
With each increment, keep track of how \cref{relation1,relation2} would change the power of $q$ for the word when putting the word into right quantum form as defined in section~\ref{sssec:RQWord}.

		\caption{Braid Monomial Exponent Counter (BMEC)}
\end{algorithm}

\begin{remark}
\label{rmk:qUnique}
Equation~\ref{z} and Lemma~\ref{explicit} can now be used to compute  $ \mathcal{E}_N(W)$ for any word $W$ in $\mathcal{A}_{\epsilon}$. In particular, let $W_1$ and $W_2$ be two words with the same letters in any order. Then the terms $s_1,r_1,d_1,\cdots,s_u,r_u,d_u,s^{\prime}_1,r^{\prime}_1,d^{\prime}_1,\cdots,s^{\prime}_v,r^{\prime}_v,d^{\prime}_v$ are identical in their word vectors, $\mathcal{V}(W_1)$ and $\mathcal{V}(W_2)$. In this case, we have that $\mathcal{E}_N(W_1)=\mathcal{E}_N(\gamma W_2)$ for some $\gamma\in\mathbb{Q}[q,q^{-1}]$.
\end{remark}

\subsection{Efficient Evaluation of $\mathcal{E}_N\left(\sum C^n\right)$ }
\label{Efficient Evaluation}
As before, let $C=W_1+W_2+\cdots +W_t$ in $\mathcal{A}_{\epsilon}$, where $W_i$ is a monomial for $1 \leq  i \leq t$. We denote by $\mathcal{W}_{\beta}^1=\{W_i\mid 1\leq i\leq t\}$ the set of monomials coming from walks in $C=C^1_{q\rho(\beta)}$. For words coming from stacks of walks, we denote the walks of stack height $s$ by $\mathcal{W}_\beta^s\coloneqq\{\prod_{j=1}^{s} W_{i_j}\mid 1\leq i_j\leq t\}$, so that each $W_{i_j}$ corresponds to the $i_j^{th}$ walk in $\mathcal{W}_{\beta}^1$. 
We are interested in the efficient computation of $\mathcal{E}_N\left(\sum C^n\right)$. As previously emphasized, this potentially infinite sum is a bottleneck for computing the algorithm. We employ two techniques for minimizing the number of the monomials in the set $\mathcal{W}_\beta\coloneqq\cup_s\mathcal{W}_\beta^s$ where we need to evaluate the function $\mathcal{E}_N$, which we describe next.

\subsubsection{Utilizing the Property of the CJP with Respect to the Mirror Image of Knots}
 Let $K$ and $\mr(K)$ be two knots that are mirror images of each other. Let $\beta$ and $\mr(\beta)$ be two braids with  $\hat{\beta}=K$ and $\mr(\hat{\beta}) =\mr( K)$. It is known that the colored Jones polynomial satisfies the property $J_{N,K}(q)=J_{N,\mr(K)}(1/q)$.  We can utilize this fact to reduce the number of computations using a simple strategy. By first computing the number of simple walks in $C_{q\rho^{\prime}(\beta)}$ and computing the number of simple walks in $C_{q\rho^{\prime}(\mr(\beta))}$ the program chooses to work with the one that has fewer simple walks to run the CJP computation.
 
Computing the simple walks of \emph{both} the braid and its mirror image, takes some additional time forcing the computation of the colored Jones polynomial to be slower than if only one set of simple walks were considered. However, this is only true for small choices of $N$. As mentioned earlier, the number of simple walks affects the performance of the algorithm more than any other parameter. By choosing the braid representation that minimizes this number, our algorithm significantly reduces the number of computations done as the number of colors increases. 


\subsubsection{The Duplicate Reduction Lemma}
Another large gain in computational efficiency is created by removing walks which have either zero contribution to the final sum or by deleting pairs of walks that have zero \emph{net} contribution to the final sum.
For computing $\mathcal{E}_N$ it is useful to divide the set of monomials from walk weights $\mathcal{W}_{\beta}^1=\{W_i \}_{i=1}^t$
 into the set of weights of simple and non-simple walks on the braid $\beta$. Due to~\cite[Lemma~4]{armond2014walks}, stated below in Lemma~\ref{lemma:DRL}, we only need to consider simple walks from the set $\mathcal{W}_\beta^1$. The non-simple walks occur in canceling pairs that have zero \emph{net} value when evaluated with the map $\mathcal{E}_N$.

\begin{lemma}[{Duplicate Reduction~\cite[Lemma~4]{armond2014walks}}]
\label{lemma:DRL}
~
\begin{enumerate}
\item \label{lemma:DRLp1}For any monomial, $W\in\mathcal{W}_\beta^1$, corresponding to the weight of a non-simple walk, 
 there is another monomial $W'\in\mathcal{W}_\beta^1$ whose weight evaluates to the negative of the original monomial.

\item \label{lemma:DRLp2} For any monomial, $W\in\mathcal{W}_\beta^s$, corresponding to the weight of a stack of simple walks, $W_{i_1},W_{i_2},\ldots,W_{i_s}$, with  $W=W_{i_1}W_{i_2}\cdots W_{i_s}$, where the stack of walks traverses the same point on $N$ different levels, the evaluation $\mathcal{E}_N(W)$ of that weight will be zero.
\end{enumerate}

\end{lemma}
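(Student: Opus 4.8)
The plan is to prove each of the two parts by making the combinatorial cancellation mechanism in Armond's walk model explicit and then reading off the consequence for $\mathcal{E}_N$. For part~\eqref{lemma:DRLp1}, I would start from a monomial $W\in\mathcal{W}_\beta^1$ that is the weight of a non-simple walk $\mathcal{P}=\{P_j\}_{j\in J}$, meaning two of the paths (or a single path with itself) traverse a common point of the braid diagram. First I would locate the \emph{lowest} such shared point $p$ as one marches up the braid, and argue this point lies at a crossing $\alpha_i$, since two strand-arcs can only coincide at a crossing. At $\alpha_i$ the two colliding path-segments enter on the two incoming strands; by swapping which of the two segments jumps down and which continues (or by rerouting the self-intersecting path so it follows the other branch past $p$), I obtain a new walk $W'$ on the \emph{same} set $J$. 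The key bookkeeping step is to check what this local surgery does to the weight: it exchanges a pair of letters at index $i$ — schematically $a_i^{\epsilon_i}$ against $b_i^{\epsilon_i}$ or $c_i^{\epsilon_i}$ — and possibly alters the permutation $\pi$ by a transposition, changing $\inv(\pi)$ by an odd number and hence contributing a factor $-1$ from the $(-q)^{|J|+\inv(\pi)}$ prefactor. One then verifies, using the commutation relations~\ref{relation1} and~\ref{relation2} (and the compensating power of $q$ these relations introduce), that $W$ and $W'$ have the \emph{same} underlying right-quantum word and opposite signs, so that their weights — and a fortiori their images under $\mathcal{E}_N$, by axioms 1 and 2 — sum to zero. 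Finally I would note the map $W\mapsto W'$ is an involution on the set of non-simple walks (applying the same lowest-shared-point surgery to $W'$ returns $W$), so the non-simple walks genuinely pair off.

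For part~\eqref{lemma:DRLp2}, the argument is more direct. Let $W=W_{i_1}W_{i_2}\cdots W_{i_s}$ be a stack of $s$ simple walks all of which pass through one common point $p$ of the braid, and suppose $s\ge N$; pick $p$ at a crossing $\alpha_i$ again. The plan is to isolate the letters of index $i$ coming from these $N$ (or more) levels: by the separability axiom 3 of $\mathcal{E}_N$, $\mathcal{E}_N(W)$ factors as $\mathcal{E}_N$ of the index-$i$ part times $\mathcal{E}_N$ of the rest, so it suffices to show the index-$i$ factor vanishes. Using Lemma~\ref{explicit} — equivalently axioms 4 and 5 — the index-$i$ factor is, up to a power of $q$, a product $\prod_{h=0}^{d-1}(1-q^{N-1-r-h})$ (positive-crossing case) or $\prod_{h=0}^{d'-1}(1-q^{r'+h+1-N})$ (negative case), where the exponent $d$ (resp.\ $d'$) counts the number of jump-down letters $a_i^{\epsilon_i}$ accumulated at $\alpha_i$ across the stack. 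Passing through $p$ on $N$ distinct levels forces at least $N$ of these $a_i$-letters (one per level that actually descends at $p$, which is precisely what "traverses the same point" encodes), so $d\ge N$; then the product ranges over $h=0,\dots,d-1\supseteq\{0,\dots,N-1\}$, and the term $h=N-1$ gives the factor $1-q^{0}=0$. Hence $\mathcal{E}_N(W)=0$.

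The main obstacle is the first part: pinning down exactly which pair of letters is exchanged by the lowest-shared-point surgery and checking that the $q$-powers introduced by the commutation relations~\ref{relation1}--\ref{relation3} exactly cancel against the change in the $(-q)^{\inv(\pi)}$ prefactor, so that only the overall sign survives. This is a careful but finite case analysis over the four local configurations (positive/negative crossing, collision of two distinct paths vs.\ a path crossing itself), and it is where the specific numerology of relations~\ref{relation1} and~\ref{relation2} — the asymmetric $q$ versus $q^2$ factors — must be used rather than merely invoked. Since the statement is quoted as \cite[Lemma~4]{armond2014walks}, I would present this as a recollection of Armond's argument, giving the involution explicitly and verifying the weight computation on a representative crossing, and citing \cite{armond2014walks} for the remaining cases and the consistency of $z$ under the relations.
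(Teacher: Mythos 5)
The paper gives no proof of this lemma at all: it is imported verbatim from Armond, so the only thing to assess is whether your reconstruction is sound. Your plan for part~(\ref{lemma:DRLp1}) is the right mechanism: swapping the two colliding path-tails above the lowest shared point changes $\pi$ by a transposition, the parity change in $(-q)^{|J|+\inv(\pi)}$ supplies the sign, and the $q$-powers produced by reordering the letters via relations~\ref{relation1}--\ref{relation3} must cancel the leftover power of $q$; this is exactly the pairing visible in Example~\ref{ex:Fig8}, where $q^3c_2^+a_3^-a_4^+b_1^-c_3^-b_4^+$ and $q^2b_1^-c_3^-a_4^+c_2^+a_3^-b_4^+$ reduce to the same right quantum word with the same total $q$-power and opposite signs. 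You correctly flag that this $q$-bookkeeping is the entire content of that part, and deferring it to Armond is acceptable here.

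Part~(\ref{lemma:DRLp2}), however, has a genuine gap. You claim that traversing a point $p$ on $N$ levels forces $d\ge N$ jump-down letters $a_i$ at the adjacent crossing, and that the factor at $h=N-1$ is $1-q^{0}$. Neither holds: a path can traverse $p$ by following the over- or under-strand (letters $c_i$ or $b_i$) without descending, so $N$ traversals only give $r+d\ge N$ on the arc entering the crossing along the over-strand; and the factor at $h=N-1$ is $1-q^{-r}$, which vanishes only if $r=0$. By axioms (4) and (5), the local factor at crossing $i$ vanishes precisely when $r<N\le r+d$, i.e.\ when the traversal multiplicity drops from $\ge N$ to $<N$ along the over-strand there, and this need not happen at a crossing adjacent to your chosen $p$. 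Concretely, in Example~\ref{ex:Fig8} the stack of $N$ copies of the simple walk $qa_1^-b_2^+c_4^+$ traverses the arc entering crossing $4$ on $N$ levels, yet at crossing $4$ one has $r=N$, $d=0$ and the local factor is the nonzero $q^{N(N-1)}$; the evaluation dies at crossing $1$, where $d=N$, $r=0$. So the argument cannot be purely local: one must track the multiplicity along the knot $\hat\beta$ (it changes only at crossings, dropping by $d$ along the over-strand and rising by $d$ along the under-strand), observe that it is $0$ on the arc of strand $1$ since $J\subseteq\{2,\ldots,m\}$, and conclude that starting from any arc of multiplicity $\ge N$ there is some crossing where the over-strand multiplicity falls from $\ge N$ below $N$; only at such a crossing does the product $\prod_{h=0}^{d-1}\bigl(1-q^{N-1-r-h}\bigr)$ (or its negative-crossing analogue) acquire the zero factor. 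This global step is the substance of Armond's Lemma~4 and is missing from, indeed contradicted by, your local claim.
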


Following Lemma~\ref{lemma:DRL}.\ref{lemma:DRLp1} we collect the monomials coming from the weights of non-simple walks :
\begin{definition}
\label{def:pairWs}
Two monomials $W_1$ and $W_2$, which have the same letters in any order are said to be \emph{paired} if $\mathcal{E}_N (W_1)=-\mathcal{E}_N(W_2)$. 
\end{definition}

Paired monomials help reduce the calculations needed for $\mathcal{E}_N(C^n)$ as follows. By  Lemma~\ref{lemma:DRL}.\ref{lemma:DRLp1} any non-simple walk, $W_i$ is \emph{paired} with a walk $W_j$ as in Definition~\ref{def:pairWs} so that $\mathcal{E}_N (W_i)+\mathcal{E}_N(W_j)=0$. Furthermore,  as we will discuss in Remark~\ref{rmk:ideal}, for paired walks $W_i, W_j$ and any walk $W_l$, $\mathcal{E}_N (W_lW_i)+\mathcal{E}_N(W_lW_j)=0=\mathcal{E}_N (W_iW_l)+\mathcal{E}_N(W_jW_l)$. So all non-simple walks can be ignored from the calculation of $\mathcal{E}_N(C)$ and furthermore in part~\ref{lemma:DRLp2} of the lemma, any stack of walks in $W_{i_1}W_{i_2}\cdots W_{i_s}\in\mathcal{W}_s$ can be ignored from the calculation as well, so long as one of the $W_{i_j}$ is non-simple. 

For monomials following the conditions of Lemma~\ref{lemma:DRL}.\ref{lemma:DRLp2} we define :
\begin{definition}
\label{def:ZeroE}
A monomial $W$ is said to be \emph{zero $N$-evaluated} if $\mathcal{E}_N(W)=0.$ 
\end{definition}

In example~\ref{ex:Fig8}, the non-simple walks $q^3c_2^+a_3^-a_4^+b_1^-c_3^-b_4^+$ and $q^2b_1^-c_3^-a_4^+c_2^+a_3^-b_4^+$ are paired in equation~\ref{eqn:exampleWalks}. Applying Definition~\ref{def:ZeroE} to these same walks for $N=2$ and $N=3$ we see that they are both zero 2-evaluated, but that neither is zero 3-evaluated. As with paired walks, given a zero $N$-evaluated walk $W_i$, where $\mathcal{E}_N(W_i)=0$, then $\mathcal{E}_N(W_lW_i)=0=\mathcal{E}_N(W_iW_l)$ for any walk $W_l\in\mathcal{W}_\beta$.

\begin{remark}
\label{rmk:ideal}
To be precise, given a braid $\beta$, we define $\mathcal{A}_\beta\coloneqq\Span_{\mathcal{R}}[\mathcal{W}_\beta]$, a subalgebra of $\mathcal{A}_{\epsilon}$. The algebra $\mathcal{A}_{\beta}$ represents exactly the minimal subalgebra of $\mathcal{A}_{\epsilon}$ containing the monomials needed by our algorithm to evaluate the colored Jones polynomial. In this subalgebra 
the $\mathcal{R}$-span of the set of zero $N$-evaluated words forms an ideal. 
Similarly, the span of the set of monomials $W$ corresponding to the weights of paired walks in $\mathcal{W}_\beta$ forms an ideal in $\mathcal{A}_\beta$. However, further information regarding these ideals does not further our computational aims so we omit the details.

\end{remark}

\begin{remark}
\label{rmk:nonsimple}
All non-simple walks are zero 2-evaluated. If $\mathcal{E}_2(W_i)=0$, then there exists a $W_j$ in $C$ so that both $\mathcal{E}_2(W_j)=0$ and $\mathcal{E}_N (W_i)+\mathcal{E}_N(W_j)=0$ and $\mathcal{E}_N (W_kW_i)+\mathcal{E}_N(W_kW_j)=0$ for all $N\geq2$, and any $W_k$ in $C$ as above. This allows the duplicate reduction lemma to be implemented with a single algorithm.
\end{remark}

Since non-simple walks will not be included in our computation we can ignore those walks completely from the determinant calculation. This is handled by Algorithms~\ref{The Simple Walk Calculator} and~\ref{Duplicate Reduction Lemma} simultaneously. 
Algorithm~\ref{The Simple Walk Calculator} focuses on removing nonsimple, paired walks.

\begin{algorithm}[h]
    \label{The Simple Walk Calculator}
		\KwIn{A braid sequence $\beta=((i_1,\epsilon_1),(i_2,\epsilon_2),\ldots,(i_m,\epsilon_m))$ such that $\hat{\beta}$ is a knot\;
        The $q$-value used in defining $\mathcal{A}_\epsilon$}
		\KwOut{ The sum of all \textit{simple walks} on strands $\{2, 3, \ldots, m\}$.} 
		\vspace{1mm}
Create an array of noncommutative variables $\mathcal{L}_{\epsilon}$\;
Compute the deformed Burau representation $\rho^{\prime}(\beta)$ defined in section~\ref{ssec:deformedB}\;
Create an array of the square $J-$submatrices of $\rho^{\prime}(\beta)$ for every $J\subseteq\{2,\ldots,m\}$\;
\tcp{$J$ is the index set of the original matrix $\rho(\beta)$.} 
Sum over all the $J-$submatrix determinants using equations~\ref{eqn:altSum} and~\ref{eqn:qdet}\;
\tcp{After each multiplication in equation~\ref{eqn:qdet} call Algorithm~\ref{Duplicate Reduction Lemma} with $N=2$ to remove all nonsimple walks}

		\caption{The Simple Walk Calculator (SWC) }
\end{algorithm}

Algorithm~\ref{Duplicate Reduction Lemma} assists in the implementation of algorithm~\ref{The Simple Walk Calculator}, with orders of magnitude speed increases for determining the reduced Burau representation.
It takes as input a list of walks $\mathcal{W}$ and returns the sublist containing the walks that do not $N$-evaluate to zero.

~

\begin{algorithm}[H]
    \label{Duplicate Reduction Lemma}
		\KwIn{A list of walks $\mathcal{W}$ \;
        The number of crossings in the braid $NCrs$ \;
        The number of colors $N\geq 1$}
		\KwOut{ A  sublist of $\mathcal{W}$ which contains all non paired walks and all non-zero $N$-evaluated walks.\; \tcp{For $N=2$ this returns all simple walks, by Remark~\ref{rmk:nonsimple}.}} 
		\vspace{1mm}

\While{$i\leq NCrs$ and $\mathcal{W}$ is nonempty}{
\For{$W\in \mathcal{W}$}{
Check the number of $a's$, $b's$ and $c's$ at crossing $i$\;
\If{$($ the number of \emph{a's} $+\max\{$the number of \emph{b's}, the number of \emph{c's}$\}) \geq N$}{Delete $W$ from the list $\mathcal{W}$}
}
}
\Return $\mathcal{W}$


\caption{The Duplicate Reduction Lemma Algorithm (DRL) }
\end{algorithm}

\section{Description of the Main Algorithm
}
The main algorithm takes as input a knot $K$, given as the closure of a braid $\beta$, a positive integer $N$ and $q$, which is either a commutative variable or a complex number. The braid $\beta$ can be chosen to be any braid with $\hat{\beta}=K$, though we recommend using the minimal forms described above for efficiency. 
The output of the main algorithm is the colored Jones polynomial $J_{N,K}(q)$. The algorithm starts by initiating the CJP to $1$. It then checks if the input braid is the empty braid, returning $1$ if this is the case. Otherwise the algorithm checks the number of  simple walks of the braid $\beta$ and its mirror image and retains whichever form had the minimal number of simple walks for the rest of the computation. After that the algorithm creates a while loop. This while loop calculates $\mathcal{E}_N( C_{q\rho^{\prime}(\beta)}^{i})$ for $i\geq 1 $ and adds those evaluations. The while loop terminates when $\mathcal{E}_N (C_{q\rho^{\prime}(\beta)}^{l})=0$ for some $l$. When this happens the algorithm exits the while loop and returns  $q^{(N-1)(\omega(\beta)-m+1)/2} \times \sum_{i=0}^{l-1}\mathcal{E}_N (C_{q\rho^{\prime}(\beta)}^{l}) $ after converting it into a Laurent polynomial.   

~

\begin{algorithm}[h]
    \label{The Colored Jones Polynomial}
		\KwIn{A braid sequence $\beta=((i_1,\epsilon_1),(i_2,\epsilon_2),\ldots,(i_k,\epsilon_k))$,
where $\epsilon_j = \pm 1$ and $\hat{\beta}$ is a knot\;
the commutative variable or complex number $q$\;
a positive integer $N\geq 1$.}
		\KwOut{ The colored Jones polynomial $J_{N,\hat{\beta}}(q)$. } 
		\vspace{1mm}
CJP$\coloneqq1$\tcp*[l]{ The evaluation of the empty braid, $\mathcal{E}_N\left(C_{q\rho^{\prime}(\beta)}^{0}\right)$, is $1$}

StackHeight$\coloneqq1$\tcp*[l]{ This is the exponent $i$ in $C_{q\rho^{\prime}(\beta)}^{i}$ }

Calculate SWC($\beta$) and SWC$\left(\mr(\beta)\right)$. If SWC$\left(\mr(\beta)\right)$ has fewer simple walks than SWC($\beta$), then reassign $q\coloneqq\frac{1}{q}, \beta\coloneqq\mr(\beta)$, and let  SW=SWC$\left(\mr(\beta)\right)$, otherwise let SW=SWC$(\beta)$\;
\tcp{SW is $C_{q\rho^{\prime}(\beta)}^1$  }

LoopDone $\coloneqq$ False\tcp*[l]{ This controls when the next while loop terminates}
 \While{ LoopDone $\neq$ True}{
{ \If {StackHeight = 1}
{WalkStack $\coloneqq$ SW\;}
\Else{
    WalkStack $\coloneqq$ $C_{q\rho^{\prime}(\beta)}^1\times$ MB($MEL$)\;
    \tcp{This state can be reached only after list $MEL$ is defined.}
    \tcp{Here we call MonomialBuilder on each monomial in WalkStack in order to efficiently compute the next right quantum form $C_{q\rho^{\prime}(\beta)}^{i}$ - here $i=StackHeight$.}
    
    WalkStack $\coloneqq$ DRL(WalkStack)\tcp*[l]{Minimize the number of words in  $C_{q\rho^{\prime}(\beta)}^{i}$}}}
\If{WalkStack $\neq  0$ }{
     Run the function BMEC on WalkStack and store that in a list  $MEL$\;
     \tcp{ $MEL= \{ \tilde{q}, \{s\},\{r\},\{d\} \}$, where each list in $MEL$ contains $\tilde{q}$, the $q$-coefficient of a walk and lists $ \{s\},\{r\},\{d\}$ encoding the number of $a_i$'s, $b_j$'s, and $c_k$'s in that same word}
 WalkWeights $\coloneqq \mathcal{E}_N\left(MEL\right)$\tcp*[l]{ Applying the formula in Lemma~\ref{explicit}}
 \tcp{Here $MEL$ is the minimal data from  $C_{q\rho^{\prime}(\beta)}^i$ needed by $\mathcal{E}_N$.}
 \If{WalkWeights = 0 }{LoopDone $\coloneqq$ True\; }
 \Else{
 CJP $\coloneqq$ CJP + WalkWeights\;
 StackHeight++\;}
  }
  \Else{
    LoopDone $\coloneqq$ True\;
  }
}
\Return $q^{(N-1)(\omega(\beta)-m+1)/2} \times $ CJP\tcp*[l]{Simplifying this promotes utility}

		\caption{The Colored Jones Polynomial}
\end{algorithm}

\section{Implementation and Performance}
In this section we give a brief description of our Mathematica implementation of the algorithms before we discuss the performance of algorithm~\ref{The Colored Jones Polynomial} on the knot table.
\subsection{Implementation}
For our implementation we used Mathematica~\cite{Mathematica} to handle the symbolic computation of the algorithm. For noncommutative multiplication in the algebra $\mathcal{A}_{\epsilon}$, we used NCAlgebra package for Mathematica~\cite{NCalgebra}.
In particular, we relied on the NCPoly data structure, which allowed the majority of computations to be done using integer arithmetic rather than symbolic calculations, providing significant speed gains.
Additionally we take advantage of Mathematica's memoization 
techniques and functional programming structure for additional efficiency gains in algorithms~\ref{MB} and ~\ref{The Simple Walk Calculator}, and with helper functions to algorithm~\ref{BMEC}.
In algorithm~\ref{Duplicate Reduction Lemma} the inner \textbf{for} loop is replaced with code that utilizes Mathematica's pattern recognition features, rather than using a constantly reindexing list suffering frequent object removal.
Intensive pieces of code are reduced to C-compiled parallelized functions where possible. Thus the code contains two implementations of Algorithms~\ref{MB},~\ref{BMEC},~and~\ref{Duplicate Reduction Lemma}, one using C-code and a commented out version using pure Mathematica code, for use if your computer does not include a C-compiler.

In implementing Algorithm~\ref{The Colored Jones Polynomial} the return step includes two features.
The community is often interested in evaluating the color Jones polynomial at roots of unity, so before returning the polynomial it evaluates at $q$ if a complex variable was supplied rather than an indeterminant.
Additionally, the initial output is often a dense collection of sums and products. We add two steps into the code to render the data into Laurent polynomials before output. All the data collected in the charts below includes this extra step of simplifying the information into this form. The additional requirement to return Laurent polynomials increases run times by up to an order of magnitude. We have chosen to include this in our performance evaluations as we believe most users will wish to render the data into a simpler form before using it.


The software for the CJP polynomial is available for public use  and currently can be downloaded from the GitHub repository: \href{http://github.com/jsflevitt/color-Jones-from-walks}{github.com/jsflevitt/color-Jones-from-walks}. 

\subsection{Performance}
In order to test the efficiency of our method we performed several tests on the knot table with knots whose crossing numbers are less than or equal to 12. Our tests were performed on a 3.20 GHz Intel Core i5 with 8.0
Gb of memory on the macOS platform. As we mentioned earlier the computational time of algorithm~\ref{The Colored Jones Polynomial} relies mainly on the number of simple walks of the input knot. For this reason we test this algorithm with respect to the number of simple walks. Specifically, recall that algorithm~\ref{The Colored Jones Polynomial} computes the minimal number of the simple walks between the knot and its mirror and then it performs the computations with the knot which has fewer number of simple walks. For this reason, the running time of~\ref{The Colored Jones Polynomial} is compared against the number of simple walks between the knot and its mirror image. This performance is shown in Figure~\ref{p1} for the all knots in the knots table with number of crossings less than or equal 12. From Figure~\ref{p1} one can observe that there are  several knots that share the same number of simple walks.



\begin{figure}[h]
  \centering
   {\includegraphics[scale=0.73]{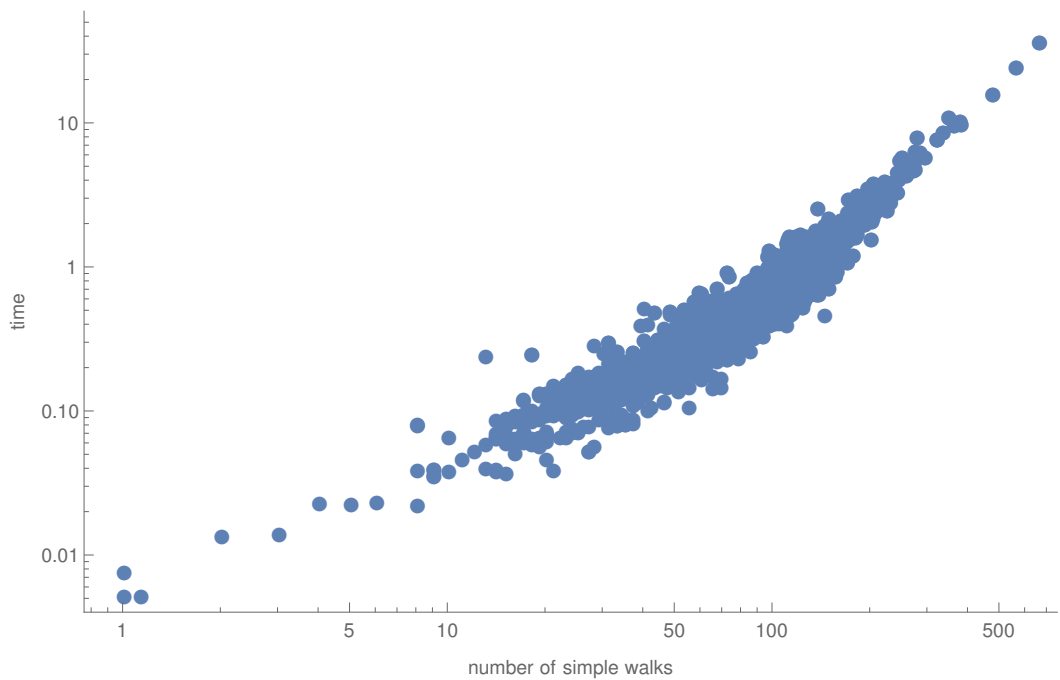}
   \caption{Algorithm \ref{The Colored Jones Polynomial} performance with respect to the number of simple walks in the braid. Here the computations are done with color N=2 and displayed with a Log-Log scale.}
  \label{p1}}
\end{figure}

The correlation between the number of crossings of a knot and the number of simple walks is not immediate from the definition. 
Moreover, Algorithm~\ref{Duplicate Reduction Lemma} makes non-linear reductions to the number of final walks needed to compute the colored Jones polynomial in Algorithm \ref{The Colored Jones Polynomial}. To obtain a better understanding of the growth of the number of simple walks as the number of crossing increases we give Figure~\ref{p2} where the number of simple walks needed to compute the colored Jones polynomial is plotted with respect to the number of crossings. Figure~\ref{p2} suggests that average of number of simple walks is bounded by $\mathcal{O}(k^2)$, where $k$ is the number of crossings. Although proving this claim requires a more thorough analysis, starting with understanding how to find the minimal braid word for any knot $K$.
\begin{figure}[h]
  \centering
   {\includegraphics[scale=0.7]{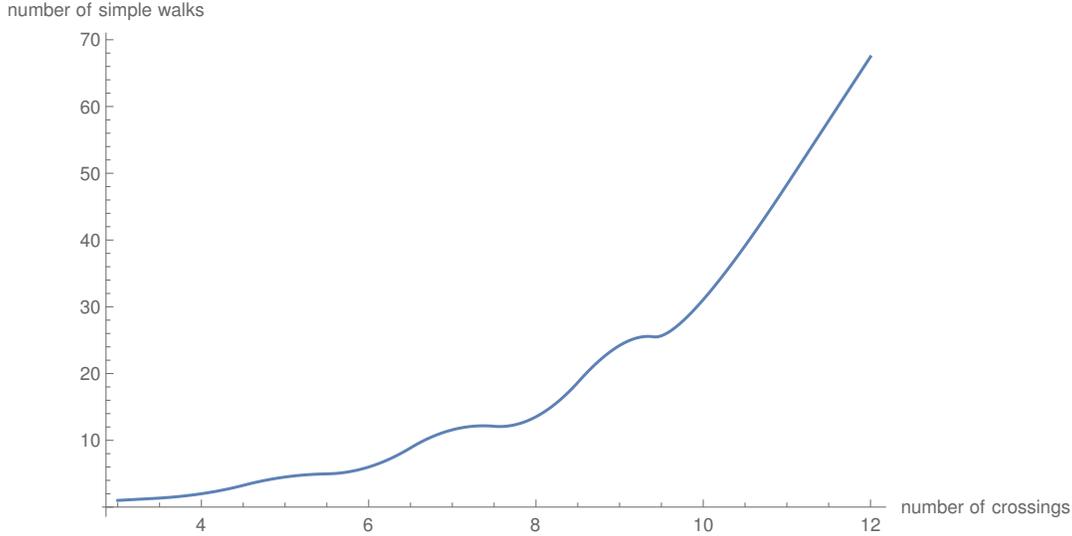}
    \caption{The growth of the average of the number of simple walks with respect to the number of crossings.}
  \label{p2}}
\end{figure}

The performance of our algorithm as the number of color increases is shown in Figure~\ref{p3}. Here we compute the colored Jones polynomial for each color $N$ between $2$ and $7$ three times for each knot and compare the average running time with respect to the number of simple walks. We show the performance of the algorithm on individual knots as we increase the number of colors in Figure \ref{p4} and elaborate on this when we compare our algorithm with the algorithm provided in KnotTheory package~\cite{KA}. 

\begin{figure}[h]
  \centering
   {\includegraphics[scale=0.73]{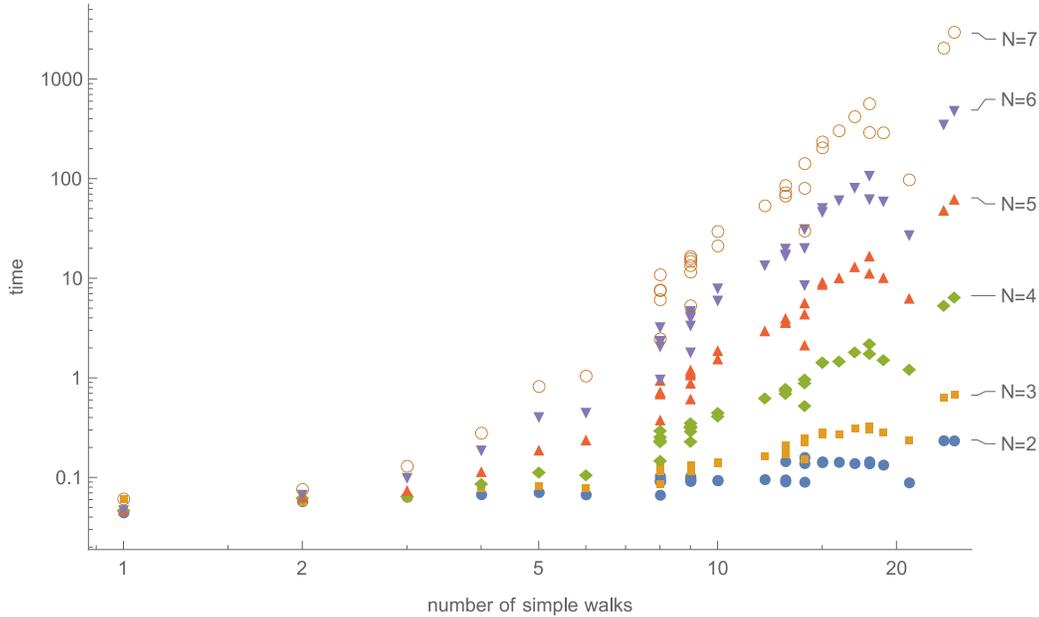}
    \caption{The growth of the running time, measured in seconds, with respect to the number of simple walks. Here the computation are shown for colors 2,3,4,5,6 and 7.}
  \label{p3}}
\end{figure}

\subsection{Number of Walks and The Duplicate Reduction Lemma}
To show the impact the Algorithm~\ref{Duplicate Reduction Lemma} on the running time we conducted several tests. We run our experiments on the first on all knots in the knot table with crossings less than or equal to 9. For all these knots we computed the number of  walks in in $C^{1}_{q \rho^{\prime}(\beta)}$. Having the number of simple walks in $C^{1}_{q \rho^{\prime}(\beta)}$ minimal is critical because our method computes $C^{n}_{q \rho^{\prime}(\beta)}$ which for all $n \geq 1 $ such that $C^{n}_{q \rho^{\prime}(\beta)} \neq 0$. Our comparison goes as follows. For each knot with number of crossing between 3 and 9 we compute the number of walks in $C^{1}_{q \rho^{\prime}(\beta)}$ in two different ways : (1) with the utilization of Algorithm~\ref{Duplicate Reduction Lemma} and (2) without using that Algorithm. Figure~\ref{impact_using_lemma} shows the impact of using Algorithm~\ref{impact_using_lemma} on the total number of walks $C^{1}_{q \rho^{\prime}(\beta)}$ that is needed to compute the colored Jones polynomial.

\begin{figure}[h]
  \centering
   {\includegraphics[scale=0.73]{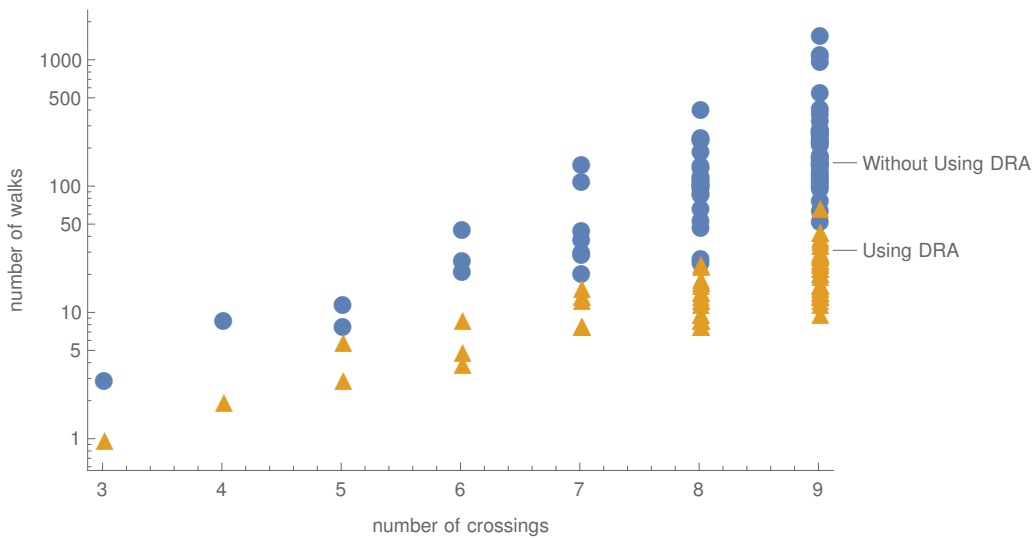}
    \caption{The effect of using Algorithm~\ref{Duplicate Reduction Lemma} on the total the number of walks in $C^{1}_{q \rho^{\prime}(\beta)}$. Circular dots represent the number of walks in $C^{1}_{q \rho^{\prime}(\beta)}$ without using the DRA algorithm while the triangular dots represent the number of number of walks of $C^{1}_{q \rho^{\prime}(\beta)}$ using the DRA algorithm.  }
  \label{impact_using_lemma}}
\end{figure}
In Figure~\ref{impact_using_lemma_time} we did we show impact of utilization Algorithm ~\ref{Duplicate Reduction Lemma} on the running time. Note that using Algorithm~\ref{Duplicate Reduction Lemma} impacts the running time of the Algorithm~\ref{The Colored Jones Polynomial} by an order of magnitude. In our experimentation some knots with 9 crossings number did not even run on our machine without the utilization of Algorithm~\ref{Duplicate Reduction Lemma} in the Algorithm~\ref{The Colored Jones Polynomial}.  

\begin{figure}[h]
  \centering
   {\includegraphics[scale=0.73]{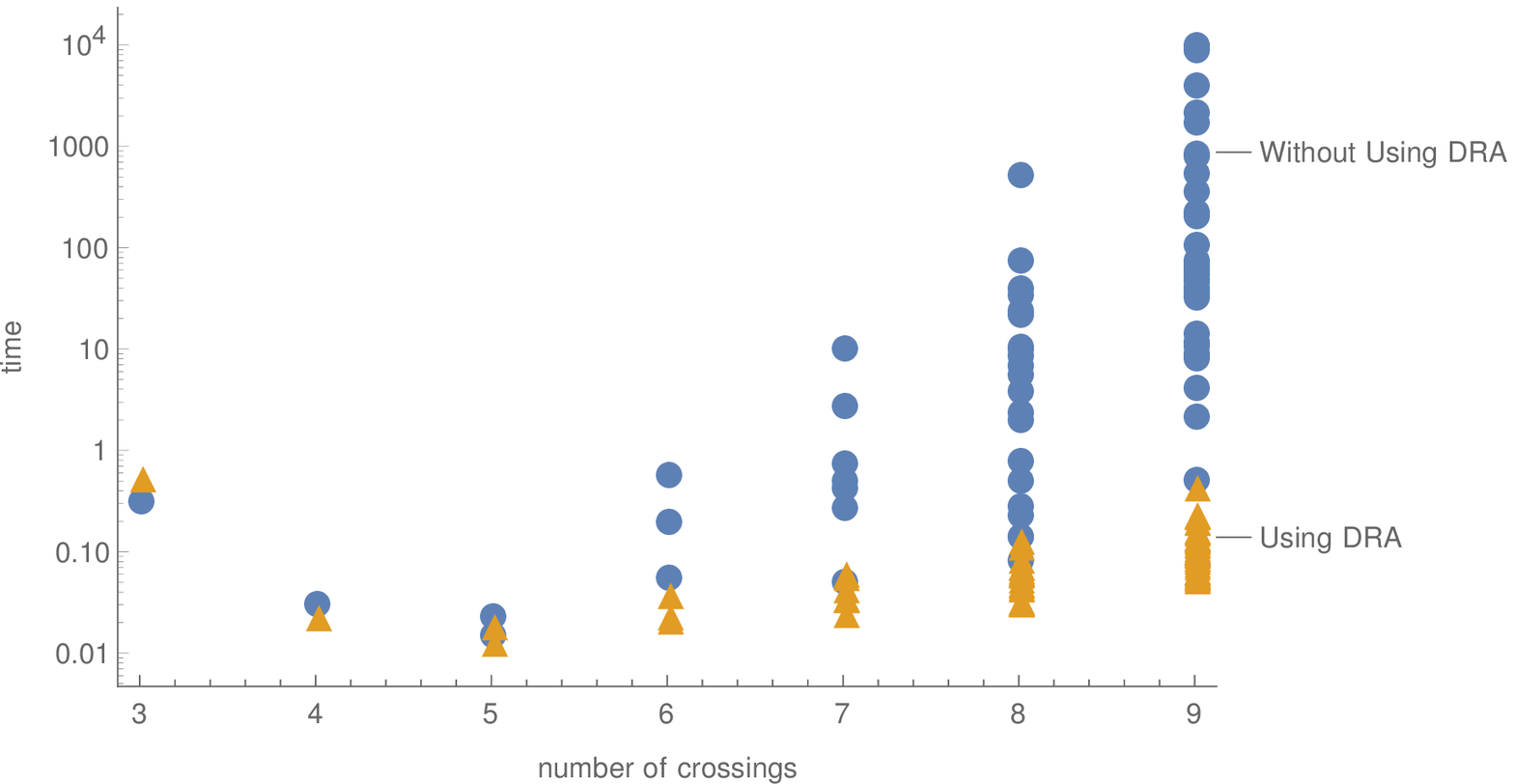}
    \caption{The effect of using Algorithm~\ref{Duplicate Reduction Lemma} on the total running time of Algorithm~\ref{The Colored Jones Polynomial}. Circular dots represent the running time of the algorithm ~\ref{The Colored Jones Polynomial} without using Algorithm ~\ref{Duplicate Reduction Lemma} while the triangular dots represent the number running time of Algorithm ~\ref{The Colored Jones Polynomial} using the DRA algorithm. }
  \label{impact_using_lemma_time}}
\end{figure}


\subsection{Comparing the running time with the KnotTheory Package}
We also run our algorithm against the implementation of the colored Jones polynomial provided in KnotTheory package \cite{KA}. Figure~\ref{p4} shows the running time comparison between our method and the KnotTheory package REngine method for the first 28 knots in the knot table. 

The circular dots represent the  KnotTheory Package running time while the triangular dots represent our algorithm's running time. The figure shows that our method is faster by an order of magnitude for most knots shown in the figure. As a final note, we have yet to overflow memory while running our algorithm, while the REngine calculation occasionally crashed for this reason preventing a fuller comparison.

\begin{figure}[h]
  \centering
   {\includegraphics[scale=0.415]{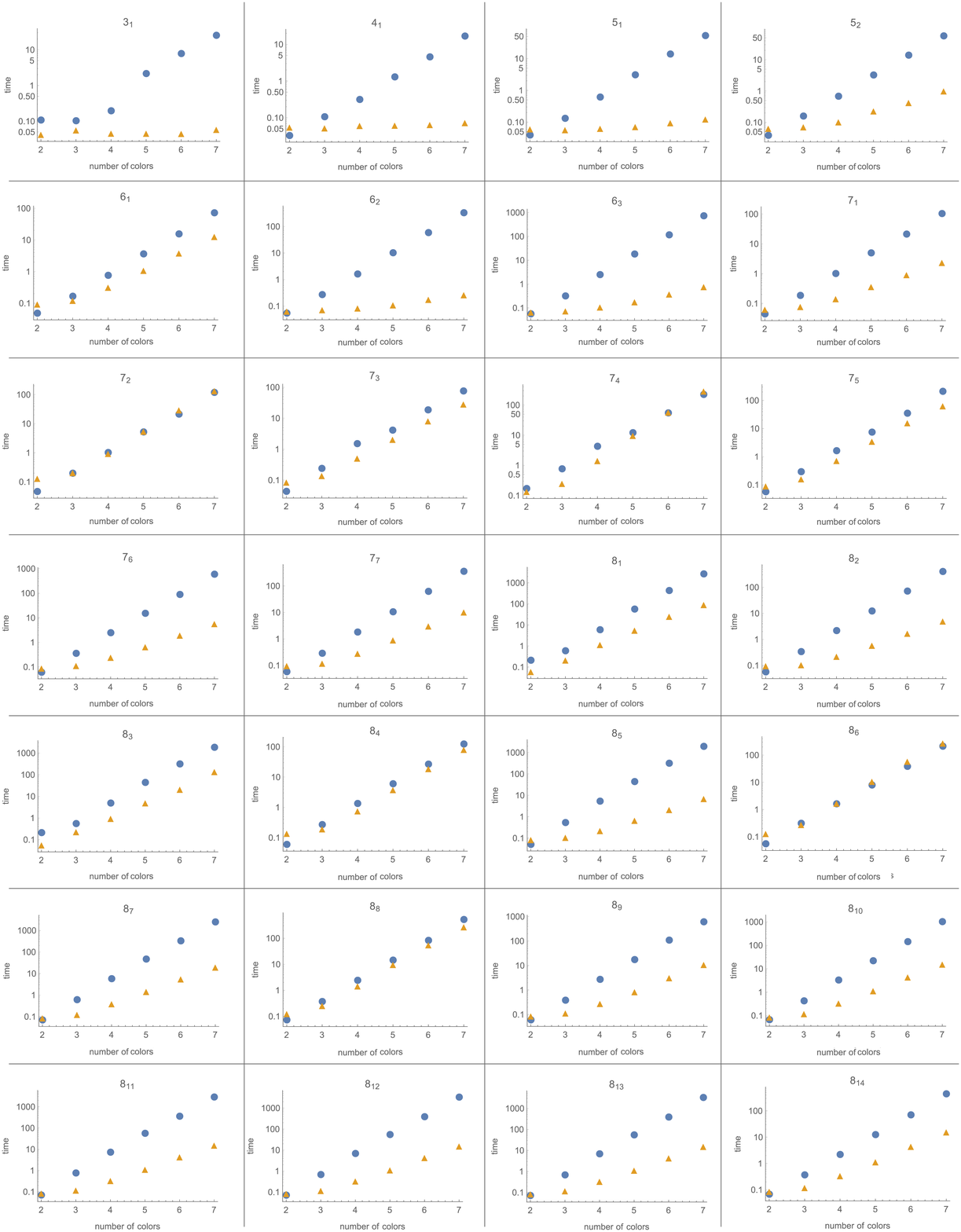}
    \caption{A comparison between the running time of the CJP algorithm implemented in the KnotTheory Package~\cite{KA}, represented by circles, and our algorithm, represented by triangles, on the first 28 knots in the knot table.
The running time is in seconds and the $x-$axis represents the number of colors. }
  \label{p4}}
\end{figure}

\section{Acknowledgment}
The authors would like to thank Cody Armond and Kyle Istvan for many useful conversations while writing this paper. We also would like to thank Matt Hogancamp for several suggestions that improved the paper's clarity.  The second author was partially supported by the National 
Science Foundation (grant no. DMS-1764210). Finally we thank Iris Buschelman, Ina Petkova and Katherine Lee Pierret Perkins for their assistance.

\bibliographystyle{plain}

\bibliography{cjp.bib}

\end{document}